\newtheorem{example}{\bf Example}[section]
\newtheorem{algorithm}{Stabilizer free Weak Galerkin Algorithm}
\newtheorem{remark}{Remark}
\def\3bar{{|\hspace{-.02in}|\hspace{-.02in}|}}
\newcommand{\doublenorm}[1]{\ensuremath{|\!|#1|\!|}}
\def\3bar{{|\hspace{-.02in}|\hspace{-.02in}|}}
\title{The lowest-order stabilizer free Weak Galerkin Finite Element Method}
\author{
	Ahmed AL-Taweel\thanks{Department of Mathematics and Statistics, 
		University of Arkansas at Little Rock, Little Rock, AR, 72204 (email:\href{mailto:Ahmed Al-Taweel@ualr.edu}{asaltaweel@ualr.edu})
		}
\and Xiaoshen Wang\thanks{Department of Mathematics and Statistics, University of Arkansas at Little Rock, Little Rock, AR, 72204 (email:\href{mailto:Xiaoshen Wang@ualr.edu}{xxwang@ualr.edu})
}}
\begin{document}
\maketitle

\begin{abstract}
Recently, a new stabilizer free weak Galerkin method (SFWG) is proposed, which is easier to implement and more efficient. The main idea is that by letting $j\geq j_{0}$ for some $j_{0}$, where $j$ is the degree of the polynomials used to compute the weak gradients, then the stabilizer term in the regular weak Galerkin method is no longer needed. Later on in \cite{al2019note}, the optimal of such $j_{0}$ for certain type of finite element spaces was given. In this paper, we propose a new efficient SFWG scheme using the lowest possible orders of piecewise polynomials for triangular meshes in $2 D$ with the optimal order of convergence.
\end{abstract}
\begin{keywords}
stabilizer free, weak Galerkin finite element methods, lowest-order finite element methods, weak gradient, error estimates.
\end{keywords}
\begin{AMS}
Primary: 65N15, 65N30; Secondary: 35J50
\end{AMS}
\pagestyle{myheadings}
\section{Introduction}
In this paper, we are concerned with the development of an SFWG finite element method using the following Poisson equation 
\begin{eqnarray}
-\Delta u &=& f \quad \mbox{ in }\Omega, \label{bvp}  \\
u&=&0 \quad \mbox{ on }\partial\Omega, \label{dbc}
\end{eqnarray}
as the model problem, where $\Omega$ is a polygonal domain in $\mathbb{R}^2$.
The variational formulation of the Poisson problem (\ref{bvp})-(\ref{dbc}) is to seek $u\in H_{0}^{1}(\Omega)$ such that
\begin{eqnarray}
(\nabla u,\nabla v)&=&(f,v),\qquad \forall v\in H_{0}^{1}(\Omega).\label{weak form}
\end{eqnarray}

The standard weak Glaerkin (WG) method for the problem (\ref{bvp})-(\ref{dbc}) seeks weak Galerkin finite element solution $u_{h}=\{u_{0},u_{b}\}$ such that  
\begin{eqnarray}
(\nabla_w u_{h},\nabla_w v)+s(u_{h},v)&=&(f,v),\label{WGg form}
\end{eqnarray}
for all $v=\{v_{0},v_{b}\}$ with $v_{b}=0\mbox{ on } \partial \Omega$, where $\nabla_w$ is the weak gradient operator and $s(u_{h},v)$ in (\ref{WGg form}) is a stabilizer term that ensures a sufficient weak continuity for the numerical approximation. The WG method has been developed and applied to different types of problems, including convection-diffusion equations \cite{LinYeZhangZhu2018,GaoWangMu}, Helmholtz equations \cite{MuWangYeZhao2014,WangZhang2017,DuZhang2017}, Stokes flow \cite{WangYe2016,WangWangZhaiZhang}, and biharmonic problems \cite{MuWangYeBiharmonic2014}. Recently, Al-Taweel and Wang in \cite{al2020}, proposed the lowest-order weak Galerkin finite element method for solving reaction-diffusion equations with singular perturbations in $2D$. One of major sources of the complexities of the WG methods and other discontinuous finite element methods is the stabilization term. 

A stabilizer free  weak Galerkin finite element method is proposed by Ye and Zhang in \cite{ye} as a new method for the solution of the Poisson equation on polytopal meshes in 2D or 3D, where $\left( P_{k}(T), P_{k}(e),[P_{j}(T)]^{2}\right) $ elements are used. It is shown that there is a $j_{0}>0$ so that the SFWG method converges with optimal order of convergence for any $j\geq j_{0}$. However, when $j$ is too large, the magnitude of the weak gradient  can be extremely large, causing numerical instability. In \cite{al2019note}, the optimal $j_{0}$ is given to improve the efficiency and avoid unnecessary numerical difficulties. In this setting, if $\left( P_{k}(T), P_{k}(e),[P_{j}(T)]^{2}\right) $ elements are used for a triangular mesh, $j_{0}=k+1$, where $k\geq1$. In this paper, we propose a scheme using $\left(P_{0}(T),P_{1}(e),[P_{1}(T)]^{2}\right)$ elements for triangular meshes with the optimal order of convergence, which is more efficient than using the regular WG method $(P_{0}(T),P_{1}(e),[P_{1}(T)]^{2})$ elements.

The goal of this paper is to develop the theoretical foundation for using the lowest-order SFWG scheme to solve the Poisson equation (\ref{bvp})-(\ref{dbc}) on a triangle mesh in $2D$. The rest of this paper is organized as follows. In Section~\ref{sect:notation}, the notations and finite element space are introduced. Section~\ref{sect:erroreq} is devoted to investigating the error equations and several other required inequalities. The main error estimate is studied in Section~\ref{sect:errorestimate}. In Section~\ref{sect:L2error}, we will derive the optimal order $L^2$ error estimates for the SFWG finite element method for solving the equations (\ref{bvp})-(\ref{dbc}). Several numerical tests are presented in Section~\ref{sect:num}. Conclusions and some future research plans are summarized in Section~\ref{sect:conclusion}.

\section{Notations}\label{sect:notation}
In this section, we shall introduce some notations, and definitions. 
Suppose $\mathcal{T}_h$ is a quasi uniform triangular partition of $\Omega$. For every element $T\in\mathcal{T}_h$, denote $h_T$ as its diameter and $h=\max_{T\in\mathcal{T}_h}h_T$. Let $\mathcal{E}_h$ be the set of all the edges in $\mathcal{T}_h.$ The weak Galerkin finite element space is defined as follows: 
\begin{eqnarray}
	V_h=\{(v_0,v_b):v_0\in P_{0}(T),\forall T\in\mathcal{T}_h,\text{ and }v_b\in P_{1}(e),\forall e\in\mathcal{E}_h\}.
\end{eqnarray}
In this instance, the component $v_{0}$ symbolizes the interior value of $v$, and the component $v_{b}$ symbolizes the edge value of $v$  on each $T$ and $e$, respectively. Let $V^{0}_{h}$ be the subspace of $V_{h}$ defined as:
\begin{eqnarray}
	V_{h}^{0}&=&\{v: v\in V_{h},v_{b}=0\mbox{ on }\partial\Omega\}.
\end{eqnarray}

For each element $T\in\mathcal{T}_h$, let $Q_0$ be the $L^2$-projection onto $P_{0}(T)$ and let $\Bbb{Q}_{h}$ be the $L^2$-projection onto $[P_{1}(T)]^{2}$. On each edge $e$, denote by $Q_b$ the $L^2$-projection operator onto $P_1(e)$. Combining $Q_0$ and $Q_b$, denote by $Q_h=\{Q_0,Q_b\}$ the $L^{2}$-projection operator onto $V_h.$

For any $v=\{v_{0},v_{b}\}\in V_{h}$, the weak gradient $\nabla_wv\in [P_{1}(T)]^{2}$ is defined on $T$ as the unique polynomial satisfying
\begin{eqnarray}
(\nabla_wv,\vec{q})_T&=&-(v_0,\nabla\cdot \vec{q})_T+\langle v_b,\vec{q}\cdot\vec{n}\rangle_{\partial T}, \quad \forall \vec{q}\in [P_{1}(T)]^2,\label{EQ:WeakGradient}
\end{eqnarray}
where $\vec{n}$ is the unit outward normal vector of $\partial T$.

For simplicity, we adopt the following notations,
\begin{equation}
(v,w)_{{\cal T}_h}=\sum_{T\in{\cal T}_h}(v,w)_{T}=\sum_{T\in{\cal T}_h}\int_{T}v w dx,\nonumber
\end{equation}
\begin{equation}
\left\langle v,w\right\rangle _{\partial{\cal T}_h}=\sum_{T\in{\cal T}_h}\left\langle v,w\right\rangle _{\partial T}=\sum_{T\in{\cal T}_h}\int_{\partial T}v w dx.\nonumber
\end{equation}
\begin{algorithm}\label{alg:Wg}A numerical solution for (\ref{bvp})-(\ref{dbc}) can be obtain by finding $u_{h}=\{v_{0},v_{b}\}\in V_{h}^{0}$, such that the following equation holds
\begin{eqnarray}
(\nabla_w u_{h},\nabla_w v)_{{\cal T}_{h}}=(f,v_{0}),\quad \forall v=\{v_{0},v_{b}\}\in V_{h}^{0}.\label{WG form}
\end{eqnarray}
\end{algorithm}

We define an energy norm $\3bar \cdot \3bar$ on $V_{h}$ as:
\begin{equation}
\3bar v \3bar^2=\sum_{T\in{\cal T}_h}(\nabla_wv,\nabla_wv)_T
.\label{def-norm}
\end{equation}
An $H^{1}$ semi norm on $V_{h}$ is defined as:
\begin{equation*}
\|v\|_{1,h}^{2}=\sum_{T\in {\cal T}_{h}}\left( \|\nabla v_{0}\|^{2}_{T}+h_{T}^{-1}\|v_{0}-v_{b}\|^{2}_{\partial T}\right) .
\end{equation*}
\begin{remark}
	$\nabla v_{0}$ in the above definition is simply a placeholder, since $\nabla v_{0}|_{T}=0 \mbox{ for all } T\in {{\cal T}_{h}}$.
\end{remark}
\begin{lemma}\label{lnew}
	There exists $C>0$ so that 
	\begin{eqnarray*}
	\3barv\3bar\leq C\|v\|_{1,h},\qquad\forall v\in v_{h}.
	\end{eqnarray*}
\begin{proof}
	For any $T\in{{\cal T}_{h}}$ and $\vec{q}\in[P_{1}(T)]^{2}$, it follows from integration by part, trace inequality, and inverse inequality that
	\begin{eqnarray*}
	(\nabla_w v,\vec{q})_{T}&=&(\nabla v,\vec{q})_{T}+\left\langle v_{0}-v_{b},\vec{q}\cdot\vec{n}\right\rangle _{\partial T}\\&\leq&\|\nabla v\|_{T}\|\vec{q}\;\|_{T}+Ch^{\frac{-1}{2}}_{T}\|v_{0}-v_{b}\|_{\partial T}\|\vec{q}\;\|_{T}.
	\end{eqnarray*}
Letting $\vec{q}=\nabla_w v$ yields the result.
\end{proof}

The following lemma is one of the major results of this paper.
\end{lemma}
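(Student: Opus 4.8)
The plan is to prove the bound elementwise and then sum, since both $\triplenorm{\cdot}^2$ and $\|\cdot\|_{1,h}^2$ are defined as sums of local contributions over the triangles $T\in\mathcal{T}_h$. Fixing $T$, I would start from the defining relation \eqref{EQ:WeakGradient} for the weak gradient and apply integration by parts to the volume term $-(v_0,\nabla\cdot\vec{q})_T$, moving the divergence off $\vec{q}$ at the cost of a boundary contribution. This produces the identity
\[
(\nabla_w v,\vec{q})_T=(\nabla v_0,\vec{q})_T+\langle v_b-v_0,\vec{q}\cdot\vec{n}\rangle_{\partial T},\qquad\forall\,\vec{q}\in[P_1(T)]^2 .
\]
This is precisely the starting point needed: the right-hand side involves only the two quantities $\|\nabla v_0\|_T$ and $\|v_0-v_b\|_{\partial T}$ that build up $\|v\|_{1,h}$. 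In fact, since $v_0\in P_0(T)$ we have $\nabla v_0=0$ as noted in the Remark, so only the jump term survives, but I keep $\nabla v_0$ written out so the estimate lines up term-by-term with the definition of $\|\cdot\|_{1,h}$.

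Next I would bound the right-hand side. The volume term is controlled by Cauchy--Schwarz as $|(\nabla v_0,\vec{q})_T|\le\|\nabla v_0\|_T\,\|\vec{q}\|_T$. For the boundary term, Cauchy--Schwarz on $\partial T$ gives $|\langle v_b-v_0,\vec{q}\cdot\vec{n}\rangle_{\partial T}|\le\|v_0-v_b\|_{\partial T}\,\|\vec{q}\|_{\partial T}$, and then, because $\vec{q}$ is a polynomial on $T$, the combination of the trace inequality and the inverse inequality yields $\|\vec{q}\|_{\partial T}\le C h_T^{-1/2}\|\vec{q}\|_T$. Putting these together gives
\[
(\nabla_w v,\vec{q})_T\le\Big(\|\nabla v_0\|_T+C h_T^{-1/2}\|v_0-v_b\|_{\partial T}\Big)\|\vec{q}\|_T .
\]

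The decisive step is then the choice $\vec{q}=\nabla_w v$, which is admissible since $\nabla_w v\in[P_1(T)]^2$. The left side becomes $\|\nabla_w v\|_T^2$, and dividing through by $\|\nabla_w v\|_T$ (the case $\nabla_w v=0$ being trivial) leaves $\|\nabla_w v\|_T\le\|\nabla v_0\|_T+C h_T^{-1/2}\|v_0-v_b\|_{\partial T}$. Squaring with $(a+b)^2\le 2a^2+2b^2$ and summing over all $T\in\mathcal{T}_h$ yields $\triplenorm{v}^2\le C\sum_{T}\big(\|\nabla v_0\|_T^2+h_T^{-1}\|v_0-v_b\|_{\partial T}^2\big)=C\|v\|_{1,h}^2$, which is the claim.

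I expect the only delicate point to be the discrete trace bound $\|\vec{q}\|_{\partial T}\le C h_T^{-1/2}\|\vec{q}\|_T$: it holds with a constant independent of $T$ precisely because $\mathcal{T}_h$ is quasi-uniform (shape regular), so a scaling argument to a fixed reference triangle keeps $C$ uniform across the mesh. Everything else is Cauchy--Schwarz together with the clever test function $\vec{q}=\nabla_w v$, so the main obstacle is really just to invoke shape regularity carefully enough that all constants remain mesh-independent.
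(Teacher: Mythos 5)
Your proposal is correct and follows exactly the paper's own argument: integrate by parts in the definition of $\nabla_w v$, bound the volume term by Cauchy--Schwarz and the boundary term by the trace and inverse inequalities, then take $\vec{q}=\nabla_w v$ and sum over elements. The only differences are cosmetic — you spell out the elementwise squaring/summation step and fix the sign of the jump term ($v_b-v_0$ rather than the paper's $v_0-v_b$, which is immaterial after Cauchy--Schwarz).
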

\begin{lemma}\label{mainlemma}
	For any  $v\in V_{h}, \mbox{ if } \nabla_w v|_{T_{i}}\in [P_{k+1}(T_{i})]^{2}, \forall i=1,2,T_{1}\cup T_{2}=e_{1}$, then
	\begin{eqnarray}
		\|v_{0}^{(1)}-v_{0}^{(2)}\|_{e_{1}}^{2}\leq Ch_{T_{1}}\|\nabla_w v\|_{T_{1}\cup T_{2}},\label{las1}
	\end{eqnarray}
	where $v_{0}^{(i)}=v_{0}|_{T_{i}},i=1,2$.
\end{lemma}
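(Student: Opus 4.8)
The plan is to control the jump of the (elementwise constant) interior values across $e_1$ by testing the weak gradient against a carefully chosen $[P_1]^2$ field whose normal trace is supported on $e_1$. Since $v_0\in P_0(T)$ forces $\nabla v_0=0$, the definition (\ref{EQ:WeakGradient}) combined with integration by parts yields, exactly as in the proof of Lemma~\ref{lnew}, the identity
\[
(\nabla_w v,\vec q)_T=\langle v_b-v_0,\vec q\cdot\vec n\rangle_{\partial T},\qquad\forall\,\vec q\in[P_1(T)]^2,
\]
valid on every $T\in{\cal T}_h$. Let $e_1$ be the common edge of $T_1$ and $T_2$, let $v_0^{(i)}$ denote the constant value of $v_0$ on $T_i$, and set the constant $\phi:=v_0^{(2)}-v_0^{(1)}$ on $e_1$.

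Next I would construct on each $T_i$ a field $\vec q^{(i)}\in[P_1(T_i)]^2$ whose normal trace along $e_1$ equals $\phi$ (for a fixed orientation $\vec n$ of $e_1$) and whose normal trace vanishes on the other two edges of $T_i$. Such a field exists and is unique because $[P_1(T)]^2$ is exactly the $\mathrm{BDM}_1$ space, for which the normal moments against $P_1$ on the three edges form a unisolvent set of degrees of freedom; prescribing all such moments pins down the normal traces pointwise. Inserting $\vec q^{(i)}$ into the identity above on $T_1$ and $T_2$, the contributions on the two non-shared edges vanish, and since the outward normals of $T_1$ and $T_2$ are opposite along $e_1$, adding the two identities (with the orientation chosen so the sign is $+$) gives
\[
\|\phi\|_{e_1}^2=\langle v_0^{(2)}-v_0^{(1)},\phi\rangle_{e_1}=(\nabla_w v,\vec q^{(1)})_{T_1}+(\nabla_w v,\vec q^{(2)})_{T_2}.
\]

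It then remains to estimate the right-hand side. By Cauchy--Schwarz it is bounded by $\sum_{i}\|\nabla_w v\|_{T_i}\|\vec q^{(i)}\|_{T_i}$, so I need the scaling bound
\[
\|\vec q^{(i)}\|_{T_i}\le C\,h_{T_i}^{1/2}\,\|\phi\|_{e_1},\qquad i=1,2 .
\]
I would prove this by mapping $T_i$ to a fixed reference triangle: on the reference element the map from edge normal-trace data to the corresponding $[P_1]^2$ field is a bounded isomorphism by equivalence of norms in finite dimension, and tracking the affine (or Piola) scaling of the $L^2(T_i)$ and $L^2(e_1)$ norms, together with the quasi-uniformity $h_{T_1}\sim h_{T_2}$, produces the factor $h_{T_i}^{1/2}$. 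Combining this with the displayed identity gives $\|\phi\|_{e_1}^2\le C\,h_{T_1}^{1/2}\|\phi\|_{e_1}\,(\|\nabla_w v\|_{T_1}+\|\nabla_w v\|_{T_2})$; cancelling one factor of $\|\phi\|_{e_1}$ (the estimate being trivial if $\phi\equiv0$) and squaring yields $\|v_0^{(1)}-v_0^{(2)}\|_{e_1}^2\le C\,h_{T_1}\,\|\nabla_w v\|_{T_1\cup T_2}^2$. I note that the right-hand side of (\ref{las1}) should carry the square on the norm for dimensional consistency, and this is precisely what the argument delivers.

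The main obstacle is the quantitative scaling estimate for $\vec q^{(i)}$: the existence and uniqueness of a field with prescribed normal traces is standard $\mathrm{BDM}_1$ unisolvence, but obtaining the correct power $h_{T_i}^{1/2}$ requires care in reconciling the area scaling of $\|\cdot\|_{T_i}$ (of order $h^2$) against the length scaling of $\|\cdot\|_{e_1}$ (of order $h$) under the reference map, and in keeping the reference-element constant uniform via shape-regularity. Everything else---the algebraic identity and the Cauchy--Schwarz step---is routine once the test fields $\vec q^{(i)}$ are in hand.
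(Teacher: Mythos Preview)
Your argument is correct and follows the same underlying strategy as the paper: test $\nabla_w v$ against a $[P_1]^2$ field whose normal trace is supported on $e_1$, so that the boundary identity collapses to the jump pairing, then Cauchy--Schwarz and a scaling bound finish the job. You also correctly flag that the right-hand side of (\ref{las1}) is missing a square; the paper's own proof in fact establishes $\|v_0^{(1)}-v_0^{(2)}\|_{e_1}^2\le Ch_{T_1}\|\nabla_w v\|_{T_1\cup T_2}^2$, matching what you derive.

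The execution differs, though. You invoke $\mathrm{BDM}_1$ unisolvence to assert existence of $\vec q^{(i)}$ with prescribed normal trace $\phi$ on $e_1$ and zero normal trace on the other edges, and then get the estimate $\|\vec q^{(i)}\|_{T_i}\le Ch_{T_i}^{1/2}\|\phi\|_{e_1}$ from a reference-element argument. The paper instead writes the test field down explicitly: on each $T_i$ it takes $\vec q_i=L\,\phi\,\vec t$, where $\vec t$ is the unit tangent to one of the non-shared edges (so $\vec q_i\cdot\vec n=0$ there automatically) and $L$ is the barycentric coordinate vanishing on the other non-shared edge (so $\vec q_i$ vanishes there entirely). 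With this construction the normal trace on $e_1$ is not the constant $\phi$ but $(1-x)\phi$, which still yields $(\nabla_w v,\vec q)_{T_1\cup T_2}=\tfrac12\|\phi\|_{e_1}^2$; the paper then computes $\|\vec q_i\|_{T_i}$ by direct integration on a normalized triangle and recovers the $h^{1/2}$ via a final scaling. Your abstract route is cleaner and generalizes more readily (the same $\mathrm{BDM}$ argument works verbatim for higher-order normal data), while the paper's explicit construction avoids any appeal to $H(\mathrm{div})$ element theory and gives concrete constants tied to the shape-regularity parameters.
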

\begin{proof}
	Without loss, we may assume that the vertices of $T_{2}$ are $(0,0), (1,0), \mbox{ and } (0,1)$,
	\begin{eqnarray*}
		e_{1}=\{(x,0)|0\leq x\leq 1\},
	\end{eqnarray*}
	and the other edge of $T_{1}$ is $(a_{1},b_{1})$, where $b_{1}<0$.
	Denote $v_{0}^{(i)}=v_{0}|_{T_{i}}, i=1,2$. 
	\begin{figure}[h!]
		\centering
		\includegraphics[width=0.9\textwidth]{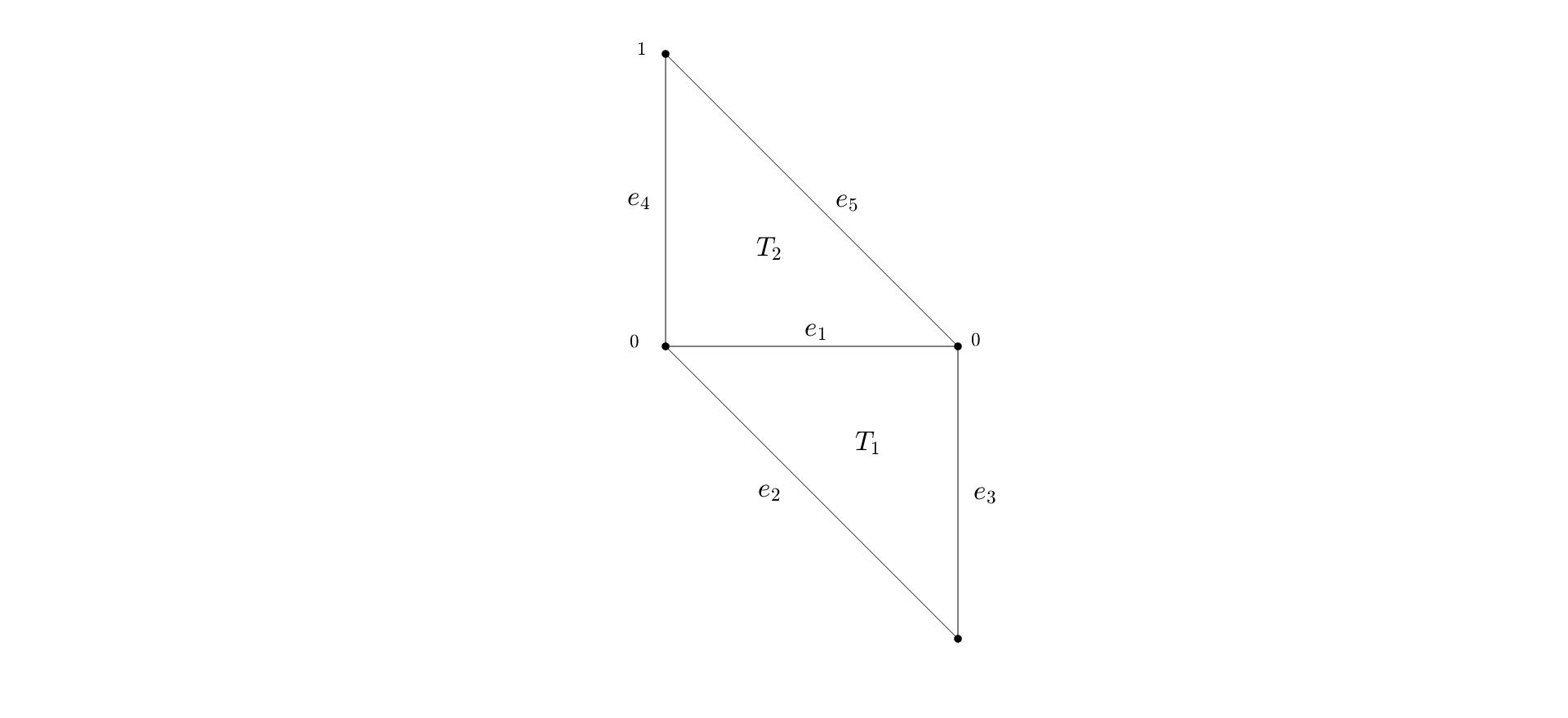}
		\caption{The two triangular mesh}\label{fi1}
	\end{figure}
Let $\vec{t}_{2}\mbox{ and } \vec{t}_{4}$ be unit tangents to $e_{2} \mbox{ and } e_{4}$, respectively; $L_{3}\mbox{ and } L_{5}$ be linear functions such that $L_{3}|_{e_{3}}=0 \mbox{ and } L_{5}|_{e_{5}}=0$. Let 
	\begin{eqnarray*}
		\vec{q}_{1}=\vec{q}\;|_{T_{1}}=L_{3}(x,y)(v_{0}^{(2)}-v_{0}^{(1)})\vec{t_{2}}
	\end{eqnarray*}
	and 
	\begin{eqnarray*}
		\vec{q}_{2}=\vec{q}\;|_{T_{2}}=L_{5}(x,y)(v_{0}^{(2)}-v_{0}^{(1)})\vec{t_{4}}.
	\end{eqnarray*} 
	Then
	\begin{eqnarray*}
		\vec{q}_{i}\cdot\vec{n}_{2i}=0,\quad \vec{q}_{i}|_{e_{2i+1}}=0, i=1,2.
	\end{eqnarray*}
	Scale $L_{1}\mbox{ and } L_{3}$ if necessary so that 
	\begin{eqnarray*}
		-L_{3}(0,0)\vec{t}_{2}\cdot\vec{n}_{1}^{(1)}=1=L_{5}(0,0)\vec{t}_{4}\cdot\vec{n}_{1}^{(2)},
	\end{eqnarray*}
	where $\vec{n}_{1}^{(i)}$ is the unit outwards normal vector of $e_{1}\in \partial T_{i},i=1,2$.\\ Since $L_{5}(1,0)\vec{t}_{2}\cdot\vec{n}_{1}^{(2)}=0$,
	\begin{eqnarray*}
		L_{5}(x,y)\vec{t}_{4}\cdot\vec{n}_{1}^{(2)}=\tilde{L}_{5}(x,y)=1-x-y.
	\end{eqnarray*}
	Similarly
	\begin{eqnarray*}
		L_{3}(x,y)\vec{t}_{2}\cdot\vec{n}_{1}^{(2)}=\hat{L}_{3}(x,y)=1-x+\alpha y, \mbox{ for some } \alpha.
	\end{eqnarray*} 
	It follows from the shape regularity assumptions that the slope of $e_{3}, \frac{1}{\alpha}$, satisfies $|\frac{1}{\alpha}|\geq \alpha_{0}>0$ for some $\alpha_{0}$. Since $\tilde{L_{3}}|_{e_{1}}=\tilde{L_{5}}|_{e_{1}}$,
	\begin{eqnarray*}
		(\nabla_w v,\vec{q})_{T_{1}\cup T_{2}}&=&\left\langle v_{b}-v_{0},\vec{q}\cdot\vec{n}\right\rangle _{\partial 
			T_{1}}+\left\langle v_{b}-v_{0},\vec{q}\cdot\vec{n}\right\rangle _{\partial T_{2}}\\&=&\left\langle v_{0}^{(2)}-v_{0}^{(1)}, (v_{0}^{(2)}-v_{0}^{(1)})\tilde{L_{3}}\right\rangle _{e_{1}}.
	\end{eqnarray*}
	Note that $0\leq \tilde{L_{5}}(x,y)\leq 1\mbox{ on } T_{1}\mbox{ and } 0\leq\tilde{L_{3}}(x,y)\leq 1\mbox{ on } T_{2}$. Then
	\begin{eqnarray*}
		(\nabla_w v, \vec{q}\;)_{T_{1}\cup T_{2}}&=&\left\langle v_{0}^{(2)}-v_{0}^{(1)}, (v_{0}^{(2)}-v_{0}^{(1)})\tilde{L_{3}}(x,y)\right\rangle _{e_{1}}\\&=&\int_{0}^{1}(v_{0}^{(2)}-v_{0}^{(1)})^{2}(1-x)d x\\&=&\frac{1}{2}\|v_{0}^{(2)}-v_{0}^{(1)}\|_{e_{1}}^{2}.
	\end{eqnarray*}
Thus
	\begin{eqnarray*}
		\|\vec{q}\;\|_{T_{2}}^{2}=\iint_{T_{2}}(1-x-y)^{2}(v_{0}^{(2)}-v_{0}^{(1)})^{2}d A=\frac{1}{12}(v_{0}^{(2)}-v_{0}^{(1)})^{2}\leq \|v_{0}^{(2)}-v_{0}^{(1)}\|_{e_{1}}^{2}.
	\end{eqnarray*}
	Similarly,
	\begin{eqnarray*}
		\|\vec{q}\;\|_{T_{1}}^{2}=\frac{1}{4}(1+\alpha+\frac{\alpha^{2}}{3})(v_{0}^{(2)}-v_{0}^{(1)})^{2}\leq\frac{1}{4}(1+\frac{1}{\alpha_{0}}+\frac{1}{3\alpha_{0}^{2}})\|v_{0}^{(2)}-v_{0}^{(1)}\|_{e_{1}}^{2}.
	\end{eqnarray*}
Thus
	\begin{eqnarray*}
		|(\nabla_w v,\vec{q}\;)_{T_{1}\cup T_{2}}|&\leq& \|\nabla_w v\|_{T_{1}}\|\vec{q}\;\|_{T_{1}}+\|\nabla_w v\|_{T_{2}}\|\vec{q}\;\|_{T_{2}}\\&\leq& C\|\nabla_w v\|_{T_{1}\cup T_{2}}\cdot \|v_{0}^{(2)}-v_{0}^{(1)}\|_{e_{1}}.
	\end{eqnarray*}
Thus after a scaling we have
	\begin{eqnarray*}
		 \|v_{0}^{(2)}-v_{0}^{(1)}\|_{e_{1}}\leq Ch_{T_{2}}^{\frac{1}{2}}\|\nabla_w v\|_{T_{1}\cup T_{2}}.
	\end{eqnarray*}
\end{proof}
\begin{lemma}
	Let $T_{1}\mbox{ and } T_{2}$ be such as in Lemma \ref{mainlemma}. Then 
	\begin{eqnarray}
	\|v_{b}-v_{0}\|^{2}_{\partial T_{1}\cup \partial T_{2}}\leq Ch_{T_{1}}\|\nabla_w v\|^{2}_{T_{1}\cup T_{2}}.
	\end{eqnarray}
\end{lemma}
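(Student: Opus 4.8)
The plan is to reduce the two‑triangle bound to a purely local estimate on a single element and then sum. The key observation is that, because $v_0\in P_0(T)$ is constant, the divergence theorem turns the definition (\ref{EQ:WeakGradient}) of the weak gradient into the same identity already exploited in Lemma~\ref{mainlemma}, namely
\begin{eqnarray*}
(\nabla_w v,\vec{q})_T=\left\langle v_b-v_0,\vec{q}\cdot\vec{n}\right\rangle_{\partial T},\qquad\forall\,\vec{q}\in[P_1(T)]^2,
\end{eqnarray*}
so that on each element the weak gradient sees $v$ only through the boundary mismatch $v_b-v_0$. This already indicates that $\|v_b-v_0\|_{\partial T}$ should be controllable by $\|\nabla_w v\|_T$ on a single $T$, and the lemma will follow by applying such a local bound on $T_1$ and on $T_2$.

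First I would establish that the normal‑trace map $\vec{q}\mapsto(\vec{q}\cdot\vec{n}|_e)_{e\subset\partial T}$ from $[P_1(T)]^2$ into $\prod_{e\subset\partial T}P_1(e)$ is a bijection. Both spaces have dimension $6$, so it suffices to check injectivity, which is the elementary reference‑triangle computation that a vector in $[P_1]^2$ whose linear normal component vanishes on all three edges must be identically zero. Consequently, given $v$, I may choose $\vec{q}\in[P_1(T)]^2$ with $\vec{q}\cdot\vec{n}=v_b-v_0$ on every edge of $T$ simultaneously. With this choice the identity above gives $(\nabla_w v,\vec{q})_T=\|v_b-v_0\|_{\partial T}^2$, and Cauchy--Schwarz yields $\|v_b-v_0\|_{\partial T}^2\le\|\nabla_w v\|_T\,\|\vec{q}\,\|_T$. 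The remaining ingredient is the scaling estimate $\|\vec{q}\,\|_T\le Ch_T^{1/2}\|v_b-v_0\|_{\partial T}$, obtained by pulling $\vec{q}$ back to a reference triangle, using that the (fixed) inverse of the normal‑trace map is bounded there, and tracking the powers of $h_T$ produced by the affine change of variables in the volume and boundary integrals. Combining the two displays and cancelling one factor of $\|v_b-v_0\|_{\partial T}$ gives the local estimate $\|v_b-v_0\|_{\partial T}^2\le Ch_T\|\nabla_w v\|_T^2$.

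Finally I would apply this local estimate on $T_1$ and on $T_2$, add the two inequalities, and invoke quasi‑uniformity ($h_{T_2}\le Ch_{T_1}$) to absorb both contributions into $Ch_{T_1}\|\nabla_w v\|_{T_1\cup T_2}^2$, which is exactly the claim; note that this treats all edges of $\partial T_1\cup\partial T_2$ at once, not merely the shared edge $e_1$. I expect the only genuinely delicate point to be the uniformity of the constant in the scaling step: one must confirm that the inverse of the normal‑trace map stays bounded across the shape‑regular family, equivalently that the construction of $\vec{q}$ can be carried out with edge‑uniform constants. This is precisely where the geometry enters through the slope parameter and the shape‑regularity bound $|\tfrac{1}{\alpha}|\ge\alpha_0$ already used in the proof of Lemma~\ref{mainlemma}, so the estimate of $\|\vec{q}\,\|_{T_1}$ there is the template for controlling this constant.
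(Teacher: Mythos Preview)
Your proposal is correct and is genuinely different from---and considerably shorter than---the paper's argument.  The key new idea you bring in is the observation that the normal-trace map $[P_1(T)]^2\to\prod_{e\subset\partial T}P_1(e)$ is a \emph{bijection} (this is exactly the $BDM_1$ unisolvence property), so on a single element you can pick a test vector $\vec q$ whose normal trace reproduces $v_b-v_0$ on \emph{all three} edges simultaneously.  The identity $(\nabla_w v,\vec q)_T=\langle v_b-v_0,\vec q\cdot\vec n\rangle_{\partial T}$ then yields a purely local bound $\|v_b-v_0\|_{\partial T}^2\le Ch_T\|\nabla_w v\|_T^2$, and summing over $T_1,T_2$ finishes the job.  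In particular your route never uses Lemma~\ref{mainlemma} on the jump $v_0^{(1)}-v_0^{(2)}$ at all, and it would give Corollary~\ref{corollary} directly by summing over all elements.

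By contrast, the paper argues edge by edge on the \emph{pair} $T_1\cup T_2$: it first invokes Lemma~\ref{mainlemma} to control the interelement jump on the shared edge $e_1$, then for each exterior edge $e_i$ constructs a test field $\vec q$ supported on $T_1\cup T_2$ whose normal trace vanishes on two edges, is tangent along a third, and matches across $e_1$, so that only the $e_i$-contribution and a controlled $e_1$-remainder survive; the shared edge is handled last by a separate one-element construction.  What the paper's approach buys is that each step only requires choosing one or two free parameters in $\vec q$, so the shape-regularity dependence of the constants can be read off explicitly from the slopes $\alpha$, $a$, $b$; what your approach buys is economy and a single scaling argument, at the cost of hiding the constant inside the inverse of a $6\times6$ normal-trace matrix (your acknowledged ``delicate point'').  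Since that inverse is exactly the $BDM_1$ interpolation operator, its stability under shape regularity is standard, so your argument is complete.  Note, however, that the dimension match $\dim[P_1(T)]^2=6=\dim\prod_e P_1(e)$ is special to this lowest-order setting; the paper's edge-by-edge template may be what is intended for the generalizations mentioned in Section~\ref{sect:conclusion}.
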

\begin{proof}
	\begin{figure}[h!]
		\centering
		\includegraphics[width=0.9\textwidth]{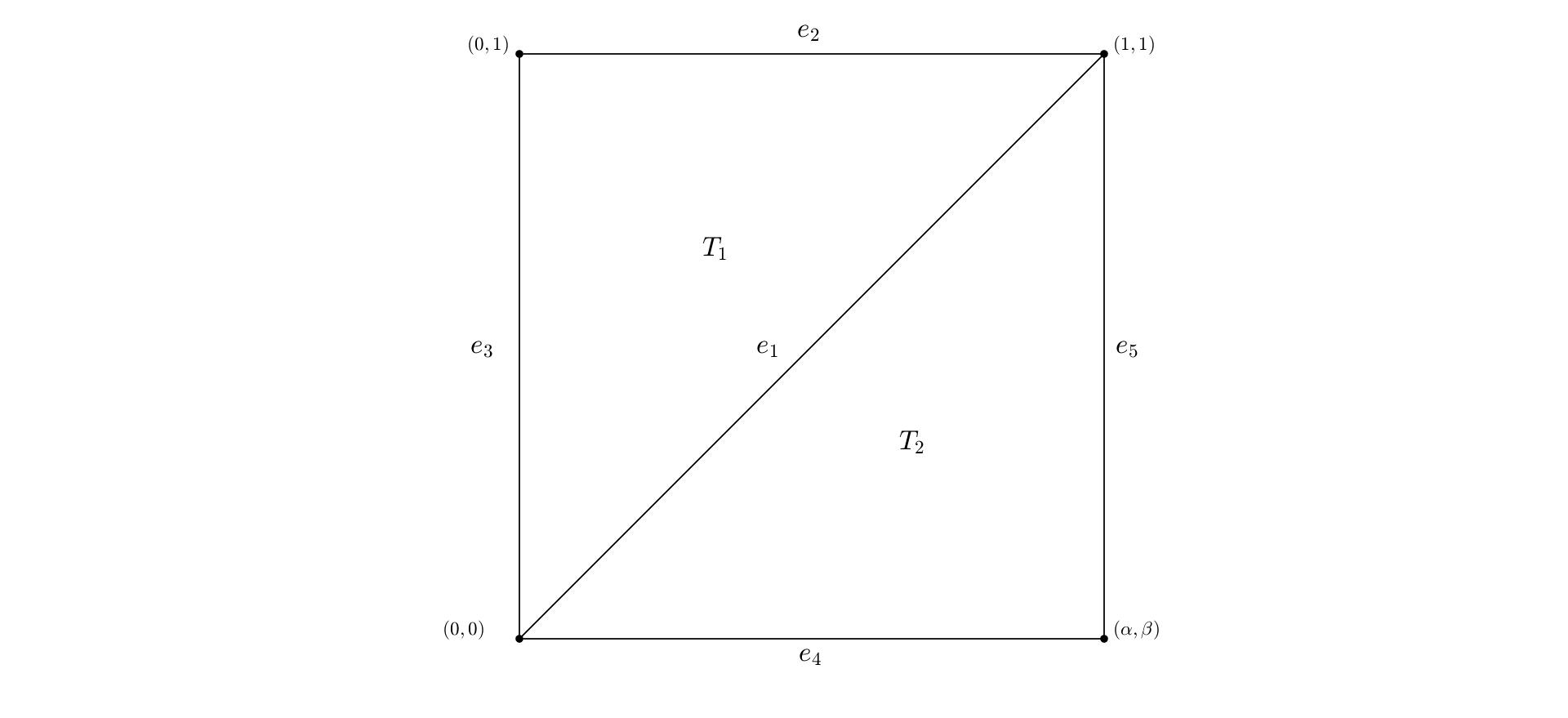}
		\caption{The two triangular mesh}\label{fi12}
	\end{figure}
	Without loss, we may assume that $T_{1}$ and $T_{2}$ are as shown in the Figure \ref{fi12}, where $e_{1}\cup e_{2}\cup e_{3}=\partial T_{1}, e_{1}\cup e_{4}\cup e_{5}=\partial T_{2}\mbox{ and } e_{1}=\partial T_{1}\cap \partial T_{2}$. If follows from Lemma \ref{mainlemma} that 
	\begin{eqnarray*}
		\|v_{0}^{(1)}-v_{0}^{(2)}\|^{2}_{e_{1}}\leq Ch_{T_{1}}\|\nabla_w v\|^{2}_{T_{1}\cup T_{2}}.
	\end{eqnarray*}
	We want to show 
	\begin{eqnarray}
	\|v_{b}-v_{0}\|^{2}_{e_{4}}\leq Ch_{T_{1}}\|\nabla_w v\|^{2}_{T_{1}\cup T_{2}}\label{eqnew1}
	\end{eqnarray}
	first.
	Let $L_{2}(x,y)=1-y$, then $L_{2}=0$ on $e_{2}$. Denote by $\vec{n}_{j}^{(i)}$ the unite outer ward normal vector to $\partial T_{i} \cap e_{j}, i=1,2, j=1,\dots,5$. Let $\vec{t}_{3}$ and $\vec{t}_{5}$ be unit tangent vectors to $e_{3}$ and $e_{5}$, respectively. Let 
	\begin{eqnarray*}
		\vec{q}\;|_{T_{1}}=\vec{q}_{1}=Q^{(1)}\vec{t}_{3},
	\end{eqnarray*}
	where $Q^{(1)}=a_{2}L_{2}$. Let $a$ be such that $a\vec{t}_{5}\cdot\vec{n}_{1}^{(2)}=-\vec{t}_{3}\cdot\vec{n}_{1}^{(1)}$ and 
	\begin{eqnarray*}
		\vec{q}\;|_{T_{2}}=\vec{q}_{2}=Q^{(2)}\vec{t}_{5},
	\end{eqnarray*}
	where $Q^{(2)}(x,y)=a\left(a_{2}(1-x)+(x-y)b_{2}\right)$. Then  $\vec{q}_{1}\cdot\vec{n}_{1}^{(1)}=-\vec{q}_{2}\cdot\vec{n}_{1}^{(2)}\mbox{ on } e_{1}$. Thus 
	\begin{eqnarray*}
	&&\left| \left\langle v_{b}-v_{0}^{(1)},\vec{q}_{1}\cdot\vec{n}^{(1)}\right\rangle _{e_{1}}+\left\langle v_{b}-v_{0}^{(2)},\vec{q}_{2}\cdot\vec{n}^{(2)}\right\rangle _{e_{1}}\right| \\&=&\left| \left\langle v_{0}^{(2)}-v_{0}^{(1)},\vec{q}_{1}\cdot\vec{n}^{(2)}\right\rangle _{e_{1}}\right|\\&\leq&\|v_{0}^{(2)}-v_{0}^{(1)}\|_{e_{1}}\|\vec{q}_{1}\|_{e_{1}}\\&\leq& Ch_{T_{1}}\|\nabla_w v\|_{T_{1}\cup T_{2}}\|\vec{q}_{1}\|_{e_{1}}\\&\leq& Ch_{T_{1}}\|\nabla_w v\|_{T_{1}\cup T_{2}}\|\vec{q}_{2}\|_{e_{1}}
	\end{eqnarray*} 
for any choice of $a_{2}$ and $b_{2}$. Write 
\begin{eqnarray*}
(v_{b}-v_{0})|_{e_{4}}=c_{0}+c_{1}x.
\end{eqnarray*}
Without loss, we assume $\vec{t}_{4}\cdot\vec{n}_{4}=1$.
 Let $aa_{2}=c_{0}, a(-a_{2}+b_{2})=c_{1}$. Then
 \begin{eqnarray*}
 \vec{q}_{2}\cdot\vec{n}_{1}^{(2)}=(v_{b}-v_{0})|_{e_{4}}.
 \end{eqnarray*}
Since
\begin{eqnarray*}
(\nabla_w v,\vec{q}\;)_{T_{1}\cup T_{2}}&=&\left\langle v_{0}^{(2)}-v_{0}^{(1)},\vec{q}_{1}\cdot\vec{n}_{1}^{(2)}\right\rangle _{e_{1}}+\left\langle v_{b}-v_{0},\vec{q}_{2}\cdot\vec{n}_{4}\right\rangle _{e_{4}}\\&=&\left\langle v_{0}^{(2)}-v_{0}^{(1)},\vec{q}_{1}\cdot\vec{n}_{1}^{(2)}\right\rangle _{e_{1}}+\|v_{b}-v_{0}\|_{e_{4}}^{2},
\end{eqnarray*}
\begin{eqnarray*}
\|v_{b}-v_{0}\|_{e_{4}}^{2}\leq \|\nabla_w v\|_{T_{1}\cup T_{2}}\|\vec{q}\;\|_{T_{1}\cup T_{2}}+C\|\nabla_w v\|_{T_{1}\cup T_{2}}\|v_{b}-v_{0}\|_{e_{4}}.
\end{eqnarray*}
Note that 
\begin{eqnarray*}
\|v_{b}-v_{0}\|_{e_{4}}^{2}&=&\int_{1}^{\alpha}(c_{0}+c_{1}x)^{2}dx\\&=&\frac{\alpha}{3}\left(c_{0}^{2}\alpha^{2}+\alpha c_{0}c_{1}+c_{1}^{2}\right)\\&\geq&\frac{\alpha}{6}\left(1+\alpha^{2}-\sqrt{\alpha^{4}-\alpha^{2}+1}\right)\left(c_{0}^{2}+c_{1}^{2}\right)\\&\geq&\frac{\alpha^{3}}{6(1+\alpha^{2})}(c_{0}^{2}+c_{1}^{2}).  
\end{eqnarray*}
It is easy to see that
\begin{eqnarray*}
\|\vec{q}\;\|_{T_{1}}^{2}=\frac{1}{a^{2}}\int_{T_{1}}c_{0}^{2}(1-y)^{2}dA=\frac{c_{0}^{2}}{12a^{2}},
\end{eqnarray*}
where $a=-\vec{t}_{5}\cdot\vec{n}_{1}^{(1)}/(\vec{t}_{5}\cdot\vec{n}_{1}^{(2)})$. It can be shown that
\begin{eqnarray*}
\|\vec{q}\;\|_{T_{2}}^{2}&=&\int_{T_{2}}\left(c_{0}(1-x)+(x-y)(c_{0}+c_{1})\right)^{2}dA\\&=&\frac{1}{12(a-b)^{2}}\left(c_{0}^{2}(3a^{2}-3ab+b^{2})+(3a-b)c_{1}c_{0}+c_{1}^{2}\right)\\&\leq&\frac{1}{12(a-b)^{2}}\left(4(a^{2}+b^{2})+1\right)\left(c_{0}^{2}+c_{1}^{2}\right) .  
\end{eqnarray*}
It follows from the shape regularity conditions that 
\begin{eqnarray*}
a&\geq&\alpha_{0},\\(a-b)&\geq&\alpha_{0},\\a^{2}+b^{2}&\leq& \beta_{0},
\end{eqnarray*}
for some $\alpha_{0}>0$ and $\beta_{0}>0$. Thus
\begin{eqnarray*}
\|\vec{q}\;\|^{2}_{T_{1}\cup T_{2}}\leq C\|v_{b}-v_{0}\|_{4}^{2}
\end{eqnarray*}
for some $C$. Thus
\begin{eqnarray*}
\|v_{b}-v_{0}\|_{e_{4}}\leq C\|\nabla_w v\|_{T_{1}\cup T_{2}}.
\end{eqnarray*}
Using a scaling argument, we have 
\begin{eqnarray*}
\|v_{b}-v_{0}\|_{e_{4}}\leq Ch_{T_{2}}\|\nabla_w v\|_{T_{1}\cup T_{2}}.
\end{eqnarray*}
Similarly, we can show that
\begin{eqnarray*}
	\|v_{b}-v_{0}\|_{e_{i}}^{2}\leq Ch_{T_{1}}\|\nabla_w v\|_{T_{1}\cup T_{2}}^{2},\; i=2,3,5.
\end{eqnarray*}
Now let's look at $\|v_{b}-v_{0}^{(1)}\|_{e_{1}\in\partial T_{2}}$. Let $\vec{q}_{2}=(a+bx)\vec{t}_{5}$, where $\vec{t}_{5}\cdot\vec{n}_{1}^{(2)}=\frac{1}{\sqrt{2}}$. Then
\begin{eqnarray*}
(\nabla_w v,\vec{q}_{2})_{T_{2}}=\left\langle v_{b}-v_{0},\frac{1}{\sqrt{2}}(a+bx)\right\rangle_{e_{1}}+\left\langle v_{b}-v_{0},a+bx\right\rangle_{e_{4}}, 
\end{eqnarray*}
choose $a$ and $b$ so that 
\begin{eqnarray*}
\left\langle v_{b}-v_{0},\frac{1}{\sqrt{2}}(a+bx)\right\rangle_{e_{1}}=\|v_{b}-v_{0}\|_{e_{1}}^{2}.
\end{eqnarray*}
Using a similar argument as when we were deriving estimate for $\|v_{b}-v_{0}\|_{e_{4}}$, we can get 
\begin{eqnarray*}
\|v_{b}-v_{0}\|^{2}_{e_{1}\in\partial T_{2}}\leq Ch_{T_{2}}\|\nabla_w v\|^{2}_{T_{1}\cup T_{2}}.
\end{eqnarray*}
Thus
\begin{eqnarray*}
	\|v_{b}-v_{0}\|^{2}_{\partial T_{1}\cup \partial T_{2}}\leq Ch_{T_{1}}\|\nabla_w v\|^{2}_{T_{1}\cup T_{2}}.
\end{eqnarray*}
\end{proof}
\begin{corollary}\label{corollary}
\begin{eqnarray}
\sum_{T\in {\cal T}_{h}} h^{-1}_{T}\|v_{b}-v_{0}\|_{\partial T}^{2}\leq C\3bar v\3bar^{2}.
\end{eqnarray}
\end{corollary}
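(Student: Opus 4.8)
The plan is to sum the edgewise estimate furnished by the lemma immediately preceding this corollary over the whole mesh, and then to control the overlaps that arise by a simple combinatorial counting argument. Recalling that $\3bar v\3bar^2=\sum_{T\in\mathcal{T}_h}\|\nabla_w v\|_T^2$, it suffices to bound each local contribution $h_T^{-1}\|v_b-v_0\|_{\partial T}^2$ by $\|\nabla_w v\|^2$ taken over $T$ together with one fixed neighbor, and then to reassemble these bounds while keeping track of how often each element is reused.

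First I would fix a triangle $T\in\mathcal{T}_h$ and select one of its edge-neighbors $T'$ across an interior edge; every element in a nondegenerate partition possesses at least one such edge, and any boundary edge of $T$ simply contributes to the $\partial T$ term on the left, which the preceding lemma already controls. Applying that lemma to the pair $(T,T')$ with $T$ playing the role of $T_1$ gives $\|v_b-v_0\|_{\partial T}^2\le\|v_b-v_0\|_{\partial T\cup\partial T'}^2\le Ch_T\|\nabla_w v\|_{T\cup T'}^2=Ch_T\bigl(\|\nabla_w v\|_T^2+\|\nabla_w v\|_{T'}^2\bigr)$. Dividing by $h_T$ produces the clean, mesh-size-independent local estimate $h_T^{-1}\|v_b-v_0\|_{\partial T}^2\le C\bigl(\|\nabla_w v\|_T^2+\|\nabla_w v\|_{T'}^2\bigr)$, where the factor $h_{T_1}$ supplied by the lemma cancels exactly against $h_T=h_{T_1}$.

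Next I would sum this inequality over all $T\in\mathcal{T}_h$. The diagonal terms $\sum_T\|\nabla_w v\|_T^2$ reassemble directly to $\3bar v\3bar^2$. The neighbor terms $\sum_T\|\nabla_w v\|_{T'(T)}^2$ are where the finite-overlap argument enters: since each element has exactly three edges, and each edge is shared with at most one other triangle, any fixed element $T'$ can be chosen as the distinguished neighbor $T'(T)$ by at most three triangles $T$, whence $\sum_T\|\nabla_w v\|_{T'(T)}^2\le 3\,\3bar v\3bar^2$. Combining the two sums yields $\sum_T h_T^{-1}\|v_b-v_0\|_{\partial T}^2\le 4C\,\3bar v\3bar^2$, which is the claim.

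The step I expect to be the genuine obstacle is not the algebra but the bounded-multiplicity bookkeeping: one must make the neighbor map $T\mapsto T'(T)$ well defined and verify that no element is overcounted more than a fixed number of times depending only on the dimension and the shape-regularity constants, so that the final $C$ remains independent of $h$. A careful write-up should also confirm that the edges lying on $\partial\Omega$ are genuinely absorbed by the preceding lemma's bound on $\|v_b-v_0\|_{\partial T\cup\partial T'}^2$ rather than being silently discarded, which is precisely why one selects the neighbor across an \emph{interior} edge while retaining the \emph{full} $\partial T$ on the left-hand side.
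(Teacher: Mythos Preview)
Your proposal is correct and is precisely the argument the paper leaves implicit: the corollary is stated without proof, as an immediate consequence of the preceding lemma, and your summation-plus-finite-overlap derivation is the natural way to fill in that gap. The only cosmetic point is that the paper does not spell out the neighbor selection or the counting at all, so your write-up is in fact more detailed than what the authors provide.
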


Combining Lemmas \ref{lnew} and Corollary \ref{corollary}, we have the following theorem.
\begin{theorem}\label{newthto}
	There exists $C_{1}>0 \mbox{ and } C_{2}>0$ such that 
\begin{eqnarray*}
C_{1}\| v\|_{1,h}\leq \3bar v\3bar\leq C_{2}\|v\|_{1,h},\qquad \forall v\in V_{h}^{0}.
\end{eqnarray*}
\end{theorem}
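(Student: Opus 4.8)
The plan is to obtain the two-sided estimate by directly combining the upper bound already proved in Lemma~\ref{lnew} with the lower bound packaged in Corollary~\ref{corollary}, once the $H^1$ semi-norm has been simplified. First I would record that, since $v_0\in P_0(T)$ is constant on each element, $\nabla v_0|_T=0$ for every $T\in\mathcal{T}_h$ (exactly the content of the Remark). Hence the semi-norm collapses to its jump part,
\[
\|v\|_{1,h}^2=\sum_{T\in\mathcal{T}_h} h_T^{-1}\|v_0-v_b\|_{\partial T}^2,
\]
which is precisely the quantity appearing on the left-hand side of Corollary~\ref{corollary}.

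For the right-hand inequality $\3bar v\3bar\leq C_2\|v\|_{1,h}$, I would simply invoke Lemma~\ref{lnew}, which gives this bound verbatim with $C_2=C$. For the left-hand inequality, Corollary~\ref{corollary} together with the simplification above yields
\[
\|v\|_{1,h}^2=\sum_{T\in\mathcal{T}_h} h_T^{-1}\|v_0-v_b\|_{\partial T}^2\leq C\,\3bar v\3bar^2;
\]
taking square roots and setting $C_1=C^{-1/2}$ gives $C_1\|v\|_{1,h}\leq\3bar v\3bar$. Chaining the two bounds establishes the asserted equivalence.

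I do not anticipate any genuine obstacle at the level of this theorem: all of the analytic effort has already been expended in Lemma~\ref{mainlemma}, in the subsequent lemma bounding $\|v_b-v_0\|_{\partial T_1\cup\partial T_2}^2$ via the explicitly constructed test vectors $\vec{q}$ on the two-triangle patch, and in the Corollary that sums these local estimates over the mesh under the shape-regularity assumption. The one point meriting a word of care is that Corollary~\ref{corollary} and Lemma~\ref{lnew} are stated for all of $V_h$, whereas the theorem is phrased on $V_h^0$; since $V_h^0\subset V_h$, both inequalities restrict without change. Restricting to $V_h^0$ is what turns the two (a~priori only semi-)norms into genuine norms, because the common kernel consists of the globally constant weak functions ($v_0\equiv c$, $v_b\equiv c$), and the boundary condition $v_b=0$ on $\partial\Omega$ eliminates the nonzero constants; the norm comparison itself, however, holds irrespective of this condition.
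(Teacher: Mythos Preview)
Your proposal is correct and mirrors exactly what the paper does: the theorem is stated immediately after the sentence ``Combining Lemmas~\ref{lnew} and Corollary~\ref{corollary}, we have the following theorem,'' with no further argument given. Your additional observations about the collapse of $\|v\|_{1,h}$ via $\nabla v_0=0$ and the $V_h^0\subset V_h$ restriction are accurate elaborations of what the paper leaves implicit.
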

\section{Error equation}\label{sect:erroreq}
In this section, we derive the error estimate of Algorithm~\ref{alg:Wg}.

\begin{lemma}\label{le50} For any function $\psi\in H^1(T)$, the following trace inequality holds true:
\begin{eqnarray}
	\|\psi\|_e^2\le C\left(h_T^{-1}\|\psi\|_T^2+h_T\|\nabla\psi\|_T^2\right).\label{ieq:trace-1}
\end{eqnarray}
\end{lemma}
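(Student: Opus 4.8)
The plan is to establish the inequality by a direct application of the divergence theorem, converting the boundary integral over $e$ into an interior integral over $T$; this is self-contained and matches the explicit geometric style used in the preceding lemmas, and it avoids the bookkeeping of a reference-element map. Since $C^\infty(\bar T)$ is dense in $H^1(T)$, it suffices to prove the estimate for smooth $\psi$ and then pass to the limit, so I may freely integrate by parts.

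First I would fix the edge $e\subset\partial T$ and let $\vec{x}_0$ be the vertex of $T$ opposite to $e$. I would introduce the vector field $\vec{\beta}(\vec{x})=\vec{x}-\vec{x}_0$, chosen for two convenient properties: on the two edges of $T$ meeting at $\vec{x}_0$ the field $\vec{\beta}$ is tangential, so $\vec{\beta}\cdot\vec{n}=0$ there, while on $e$ the quantity $\vec{\beta}\cdot\vec{n}$ is the constant $d_e$, the distance from $\vec{x}_0$ to the line containing $e$, i.e. the height of $T$ over $e$. Applying the divergence theorem to $\psi^2\vec{\beta}$, using that only $e$ contributes to the boundary integral and that $\nabla\cdot\vec{\beta}=2$ in $\mathbb{R}^2$, yields
\[
d_e\|\psi\|_e^2=\int_T\left(2\psi\,\nabla\psi\cdot\vec{\beta}+2\psi^2\right)dx.
\]
Bounding $|\vec{\beta}|\le h_T$ on $T$ and using the Cauchy--Schwarz inequality gives
\[
d_e\|\psi\|_e^2\le 2h_T\|\psi\|_T\|\nabla\psi\|_T+2\|\psi\|_T^2.
\]

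The decisive quantitative input is the quasi-uniformity (shape regularity) of $\mathcal{T}_h$, which guarantees a constant $c>0$ with $d_e\ge c\,h_T$; dividing by $d_e$ then produces the factor $h_T^{-1}$ in front of $\|\psi\|_T^2$ and a mixed term $C\|\psi\|_T\|\nabla\psi\|_T$. Finally I would split the mixed term by Young's inequality, $\|\psi\|_T\|\nabla\psi\|_T\le\tfrac12\left(h_T^{-1}\|\psi\|_T^2+h_T\|\nabla\psi\|_T^2\right)$, which collects everything into the asserted form $\|\psi\|_e^2\le C\left(h_T^{-1}\|\psi\|_T^2+h_T\|\nabla\psi\|_T^2\right)$.

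The main obstacle is not any single inequality but the geometric verification: confirming that $\vec{\beta}\cdot\vec{n}$ vanishes on the two adjacent edges and equals the height $d_e$ on $e$, and establishing via shape regularity that $d_e$ is comparable to $h_T$ so that dividing by $d_e$ costs only a uniform constant. As an alternative route one could map $T$ affinely to a fixed reference triangle $\hat T$, invoke the continuity of the trace operator $H^1(\hat T)\to L^2(\hat e)$, and transfer the bound back using $|\det B|\sim h_T^2$, $\|B\|\sim h_T$, and $\|B^{-1}\|\sim h_T^{-1}$; the same scaling factors reproduce the weights $h_T^{-1}$ and $h_T$ and recover the identical estimate.
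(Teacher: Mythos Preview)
Your argument is correct: the vector field $\vec{\beta}(\vec{x})=\vec{x}-\vec{x}_0$ emanating from the opposite vertex does vanish in normal component on the two adjacent edges and equals the height $d_e$ on $e$, the divergence theorem gives the identity you wrote, and shape regularity furnishes $d_e\gtrsim h_T$ so that division and Young's inequality deliver the stated bound. The reference-element alternative you sketch is equally valid.

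However, there is no proof to compare against: the paper simply states Lemma~\ref{le50} as the standard trace inequality and moves on, treating it as a known tool (just as it does the inverse inequality in the lemma that follows). Your write-up therefore supplies more than the paper does, and would serve perfectly well if a self-contained justification were desired.
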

\begin{lemma}\label{le6}(Inverse Inequality) There exists a constants $C$ such that for any piecewise polynomial $\psi|_{T}\in P_{k}(T)$,
	\begin{align}
		\doublenorm{\nabla \psi }_{T}\leq Ch^{-1}_{T}\doublenorm{\psi}_{T}, \qquad \forall T\in {\cal T}_h.\label{eqle6}
	\end{align} 
\end{lemma}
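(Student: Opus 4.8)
The plan is to establish this inverse inequality by the standard scaling argument to a fixed reference element, where the finite dimensionality of $P_k$ supplies a norm equivalence with a constant independent of the mesh. First I would fix a reference triangle $\hat T$ (for instance the unit triangle with vertices $(0,0)$, $(1,0)$, $(0,1)$) and, for each $T\in\mathcal{T}_h$, introduce the affine map $F_T(\hat x)=B_T\hat x+b_T$ carrying $\hat T$ onto $T$. Given $\psi|_T\in P_k(T)$ I would set $\hat\psi=\psi\circ F_T\in P_k(\hat T)$, so that gradients and $L^2$ norms transform by the usual change-of-variables formulas involving $B_T$ and $\det B_T$.

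The key step is the reference-element estimate. On the finite-dimensional space $P_k(\hat T)$ the two maps $\hat\psi\mapsto\|\hat\nabla\hat\psi\|_{\hat T}$ and $\hat\psi\mapsto\|\hat\psi\|_{\hat T}$ are comparable, since all norms (and seminorms vanishing only on a fixed complementary subspace) on a finite-dimensional space are equivalent; hence there is a constant $\hat C=\hat C(k,\hat T)$, independent of $T$, with $\|\hat\nabla\hat\psi\|_{\hat T}\le\hat C\,\|\hat\psi\|_{\hat T}$. Pulling this back through the chain rule $\nabla\psi=B_T^{-T}\hat\nabla\hat\psi$ and the Jacobian relation $dx=|\det B_T|\,d\hat x$, the factors $|\det B_T|^{1/2}$ cancel and I obtain $\|\nabla\psi\|_T\le C\,\|B_T^{-1}\|\,\|\psi\|_T$.

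The final step invokes the mesh hypotheses. Because $\mathcal{T}_h$ is quasi-uniform and shape regular, the matrix $B_T$ obeys $\|B_T\|\le C h_T$ and $\|B_T^{-1}\|\le C h_T^{-1}$ with constants depending only on the shape-regularity parameter. Substituting the second bound yields exactly $\|\nabla\psi\|_T\le C h_T^{-1}\|\psi\|_T$ with $C$ uniform in $T$, which is the claim.

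I do not expect a genuine obstacle: the only point requiring care is that the constant be independent of the particular element, and this is precisely what the reference-element reduction together with shape regularity guarantees. The estimate $\|B_T^{-1}\|\le C h_T^{-1}$ is the single place where quasi-uniformity is essential, since on a degenerating family of triangles the operator norm of $B_T^{-1}$ would grow faster than $h_T^{-1}$ and the uniform constant would be lost.
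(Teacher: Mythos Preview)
Your argument is correct and is the standard scaling proof of the inverse inequality. The paper does not actually supply a proof of this lemma; it is stated as a known result and used as a tool, so there is nothing to compare against.

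One small correction to your commentary: the bound $\|B_T^{-1}\|\le C h_T^{-1}$ requires only shape regularity, not quasi-uniformity. Shape regularity bounds the ratio $h_T/\rho_T$ (diameter to inradius) on each element, and since $\|B_T^{-1}\|\le C\rho_T^{-1}$, one obtains $\|B_T^{-1}\|\le C h_T^{-1}$ element by element. Quasi-uniformity is a global condition relating $h_T$ to the maximal mesh size $h$ and is not needed for this purely local estimate.
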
	
\begin{lemma}\label{lt}
	Let $u\in H^{2+i}_{0}(\Omega), i=0,1,$ be the solution of the problem and $\mathcal{T}_h$ be a finite element partition of $\Omega$ satisfying the shape regularity assumptions. Then, the $L^2$ projections $Q_0 \mbox{ and } \Bbb{Q}_{h}$ satisfies
	\begin{eqnarray}
		&&\sum_{T\in\mathcal{T}_h}\left(\|u-Q_0u\|_T^2+h_T^2\|\nabla(u-Q_0u)\|_T^2\right)\le Ch^{2}\|u\|_{1}^2\label{ieq:Q0},\\
		&&\sum_{T\in\mathcal{T}_h}\left(\|\nabla u-\mathbb{Q}_h\nabla u\|_T^2+h_T^2\|\nabla u-\mathbb{Q}_h\nabla u)\|_{1,T}^2\right)\le Ch^{2(1+i)}\|u\|_{2+i}^{2}, i=0,1.
	\end{eqnarray}
\end{lemma}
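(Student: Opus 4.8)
The plan is to treat these as standard $L^2$-projection approximation estimates, obtained from the Bramble--Hilbert lemma, a scaling (reference-element) argument, and the inverse inequality of Lemma~\ref{le6}. The first estimate is almost immediate. Since $Q_0u|_T$ is the cell average of $u$, a Poincar\'e-type inequality (equivalently, Bramble--Hilbert applied to $u-Q_0u$ on the reference triangle followed by scaling) gives $\|u-Q_0u\|_T\le Ch_T\|\nabla u\|_T$. For the gradient term I would simply note that $\nabla(Q_0u)=0$ on each $T$, so $\nabla(u-Q_0u)=\nabla u$ and $h_T^2\|\nabla(u-Q_0u)\|_T^2=h_T^2\|\nabla u\|_T^2$. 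Summing both contributions over $T\in\mathcal{T}_h$ and using $h_T\le h$ yields $\sum_T\left(\|u-Q_0u\|_T^2+h_T^2\|\nabla(u-Q_0u)\|_T^2\right)\le Ch^2\|\nabla u\|^2\le Ch^2\|u\|_1^2$.

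For the second estimate I would set $w=\nabla u$, so that $w\in[H^{1+i}(\Omega)]^2$ and $|w|_{1+i,T}=|u|_{2+i,T}$, and study the $L^2$-projection $\mathbb{Q}_h$ onto $[P_1(T)]^2$. The $L^2$ part is handled by best approximation: for any $p\in[P_1(T)]^2$ one has $\|w-\mathbb{Q}_hw\|_T\le\|w-p\|_T$, and choosing $p$ to be a suitable Bramble--Hilbert polynomial approximant of $w$ (a constant when $i=0$, a degree-one polynomial when $i=1$) gives $\|w-\mathbb{Q}_hw\|_T\le Ch_T^{1+i}|u|_{2+i,T}$. Squaring, summing, and using $h_T\le h$ produces the required $Ch^{2(1+i)}\|u\|_{2+i}^2$ bound for the first term.

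The main obstacle is the $H^1$-seminorm term $h_T^2\|w-\mathbb{Q}_hw\|_{1,T}^2$, because the $L^2$-projection minimizes the $L^2$ error rather than the $H^1$ error, so best approximation does not apply directly. I would instead split, for the same $p\in[P_1(T)]^2$,
\begin{eqnarray*}
|w-\mathbb{Q}_hw|_{1,T}\le|w-p|_{1,T}+|p-\mathbb{Q}_hw|_{1,T},
\end{eqnarray*}
and control the second term with the inverse inequality (Lemma~\ref{le6}), since $p-\mathbb{Q}_hw\in[P_1(T)]^2$:
\begin{eqnarray*}
|p-\mathbb{Q}_hw|_{1,T}\le Ch_T^{-1}\|p-\mathbb{Q}_hw\|_T\le Ch_T^{-1}\left(\|p-w\|_T+\|w-\mathbb{Q}_hw\|_T\right)\le Ch_T^{-1}\|w-p\|_T,
\end{eqnarray*}
where the last step again invokes the best-approximation property $\|w-\mathbb{Q}_hw\|_T\le\|w-p\|_T$. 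Inserting the scaled estimates $|w-p|_{1,T}\le Ch_T^{i}|u|_{2+i,T}$ and $h_T^{-1}\|w-p\|_T\le Ch_T^{i}|u|_{2+i,T}$ gives $|w-\mathbb{Q}_hw|_{1,T}\le Ch_T^{i}|u|_{2+i,T}$. Combined with the $L^2$ bound this yields $\|w-\mathbb{Q}_hw\|_{1,T}\le Ch_T^{i}|u|_{2+i,T}$, so that $h_T^2\|w-\mathbb{Q}_hw\|_{1,T}^2\le Ch_T^{2(1+i)}|u|_{2+i,T}^2$; summing over $T$ and using the shape-regularity assumption (through Lemma~\ref{le6}) together with $h_T\le h$ completes the estimate.
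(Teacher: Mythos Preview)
Your argument is correct and follows the standard route: Poincar\'e/Bramble--Hilbert for the $L^2$ part, the triangle-plus-inverse-inequality trick (Lemma~\ref{le6}) to transfer the $L^2$ best-approximation property of $\mathbb{Q}_h$ to an $H^1$-seminorm bound, and then summation over $T$. Each step is justified, and the powers of $h_T$ match the claim for both $i=0$ and $i=1$.

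For comparison, the paper itself does not supply a proof of Lemma~\ref{lt}: it records the two estimates as known $L^2$-projection approximation properties and uses them as black boxes in later sections. So there is no ``paper's own proof'' to match against; your write-up is exactly the standard argument one would give if asked to fill in the details, and it is consistent with the tools the paper has already set up (in particular Lemma~\ref{le6}).
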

\begin{lemma}\label{fgv1}
	Let $\phi\in H^{1}(\Omega)$. Then for each element $T\in {\cal T}_{h}$, we have
	\begin{eqnarray}
		\Bbb{Q}_{h}(\nabla \phi)=\nabla_w Q_{h}\phi.\label{2.10l}
	\end{eqnarray}
	\begin{proof}
		By definition (\ref{EQ:WeakGradient}) and integration by parts, for each $\vec{q}\in [P_{1}(T)]^{2}$ we have 
		\begin{eqnarray*}
			(\Bbb{Q}_{h}(\nabla\phi),\vec{q})_{T}&=&-(\phi,\nabla\cdot \vec{q})_{T}+\left\langle \phi,\vec{q}\cdot\vec{n}\right\rangle _{\partial T}\\&=&-(Q_{0}\phi,\nabla\cdot \vec{q})_{T}+\left\langle Q_{b}\phi,\vec{q}\cdot\vec{n}\right\rangle _{\partial T}\\&=&(\nabla_w Q_{h}\phi,\vec{q})_{T}.\label{bgt}
		\end{eqnarray*}
		which implies (\ref{2.10l}).
	\end{proof}
\end{lemma}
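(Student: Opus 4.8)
The plan is to test both sides against an arbitrary $\vec{q}\in[P_1(T)]^2$ and show that the two $L^2$ inner products agree on $T$; since both $\mathbb{Q}_h(\nabla\phi)$ and $\nabla_w Q_h\phi$ belong to $[P_1(T)]^2$, equality of all such pairings forces the two vector fields to coincide.

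First I would start from the defining property of the $L^2$-projection $\mathbb{Q}_h$ onto $[P_1(T)]^2$: for every $\vec{q}\in[P_1(T)]^2$,
\begin{eqnarray*}
(\mathbb{Q}_h(\nabla\phi),\vec{q})_T=(\nabla\phi,\vec{q})_T,
\end{eqnarray*}
and then integrate by parts on $T$ to move the derivative off $\phi$, obtaining
\begin{eqnarray*}
(\nabla\phi,\vec{q})_T=-(\phi,\nabla\cdot\vec{q})_T+\langle\phi,\vec{q}\cdot\vec{n}\rangle_{\partial T}.
\end{eqnarray*}

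The crux is to replace $\phi$ by its projections inside these two pairings, which rests on a degree count for $\vec{q}$. Since $\vec{q}\in[P_1(T)]^2$, its divergence $\nabla\cdot\vec{q}$ is constant, so $\nabla\cdot\vec{q}\in P_0(T)$ and the defining relation for $Q_0$ gives $(\phi,\nabla\cdot\vec{q})_T=(Q_0\phi,\nabla\cdot\vec{q})_T$. On each edge $e\subset\partial T$ the outward normal $\vec{n}$ is constant, so the trace of $\vec{q}\cdot\vec{n}$ is a degree-one polynomial on $e$, i.e. lies in $P_1(e)$; hence the defining relation for $Q_b$ yields $\langle\phi,\vec{q}\cdot\vec{n}\rangle_{\partial T}=\langle Q_b\phi,\vec{q}\cdot\vec{n}\rangle_{\partial T}$. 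Substituting these two identities recovers exactly the right-hand side of the weak-gradient definition (\ref{EQ:WeakGradient}) applied to $Q_h\phi=\{Q_0\phi,Q_b\phi\}$, namely $(\nabla_w Q_h\phi,\vec{q})_T$.

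Chaining the steps gives $(\mathbb{Q}_h(\nabla\phi),\vec{q})_T=(\nabla_w Q_h\phi,\vec{q})_T$ for all $\vec{q}\in[P_1(T)]^2$, and choosing $\vec{q}=\mathbb{Q}_h(\nabla\phi)-\nabla_w Q_h\phi$ yields the claim (\ref{2.10l}). The only genuine obstacle is the degree bookkeeping in the middle step: one must verify that the polynomial degrees of $\nabla\cdot\vec{q}$ and of the edge traces $\vec{q}\cdot\vec{n}$ are low enough that $Q_0$ and $Q_b$ act as the identity inside the respective pairings; everything else is a direct application of the two defining relations together with one integration by parts.
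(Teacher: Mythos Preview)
Your argument is correct and follows the same route as the paper: test against $\vec{q}\in[P_1(T)]^2$, use the $L^2$-projection property of $\mathbb{Q}_h$, integrate by parts, and replace $\phi$ by $Q_0\phi$ and $Q_b\phi$ via the degree count so as to recognize the weak-gradient definition. If anything, you are more explicit than the paper in spelling out why $\nabla\cdot\vec{q}\in P_0(T)$ and $\vec{q}\cdot\vec{n}|_e\in P_1(e)$ justify the projection substitutions.
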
	
\begin{lemma}\label{le3}
	Let $\phi\in H^{1}(\Omega)$. Then for all $v\in V_{h}^{0}$, we have
	\begin{eqnarray}
		( \nabla \phi,\nabla v_{0})_{T}=( \nabla_w(Q_{h} \phi),\nabla_w v)_{T}&+&\left\langle (\Bbb{Q}_{h}(\nabla \phi)\cdot \vec{n},v_{0}-v_{b}\right\rangle _{\partial T}.\label{eq1le3}
	\end{eqnarray}
	\begin{proof}
		Let $\Bbb{Q}_{h}(\nabla\phi)=\vec{q}$ and $Q_{h}\phi=P$. By Lemma \ref{fgv1} $\vec{q}=\nabla_w P$.
		\begin{eqnarray}
			(\vec{q},\nabla v_{0})_{T}&=&-(\nabla\cdot \vec{q},v_{0})_{T}+\left\langle \vec{q}\cdot\vec{n},v_{0}\right\rangle _{\partial T},\label{2w2w}\\(\vec{q},\nabla_w v)_{T}&=&-(\nabla\cdot\vec{q},v_{0})_{T}+\left\langle \vec{q}\cdot\vec{n},v_{b}\right\rangle _{\partial T},\label{2w2w1}
		\end{eqnarray}
implies
		\begin{eqnarray*}
			(\vec{q},\nabla v_{0})_{T}&=&(\vec{q},\nabla_w v)_{T}+\left\langle \vec{q}\cdot\vec{n},v_{0}-v_{b}\right\rangle _{\partial T}\\&=&(\nabla_w P,\nabla_w v)_{T}+\left\langle \vec{q}\cdot\vec{n},v_{0}-v_{b}\right\rangle _{\partial T},
		\end{eqnarray*}
		which completes the proof.
	\end{proof}
\end{lemma}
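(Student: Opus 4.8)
The plan is to follow the standard weak-Galerkin commuting-diagram strategy: first replace $\nabla\phi$ on the left-hand side by its projection $\vec q:=\mathbb{Q}_{h}(\nabla\phi)$, then transfer the resulting volume integral onto the weak gradient of $v$ by two applications of integration by parts, and finally identify $\vec q$ with $\nabla_w(Q_{h}\phi)$ via Lemma~\ref{fgv1}.

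First I would set $\vec q=\mathbb{Q}_{h}(\nabla\phi)$ and $P=Q_{h}\phi$, and note that Lemma~\ref{fgv1} gives $\vec q=\nabla_w P$ on each $T$. Since $\vec q\in[P_{1}(T)]^{2}$ and $\nabla v_{0}$ lies in the range of $\mathbb{Q}_{h}$ (indeed $\nabla v_{0}=0$ in the present lowest-order space, per the earlier remark), the defining property of the $L^{2}$-projection yields $(\nabla\phi,\nabla v_{0})_{T}=(\mathbb{Q}_{h}(\nabla\phi),\nabla v_{0})_{T}=(\vec q,\nabla v_{0})_{T}$. This reduces the claim to an identity involving only the polynomial vector field $\vec q$.

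Next I would integrate by parts on the element $T$: because $\vec q$ and $v_{0}$ are smooth on $T$, the divergence theorem gives $(\vec q,\nabla v_{0})_{T}=-(\nabla\cdot\vec q,v_{0})_{T}+\langle\vec q\cdot\vec n,v_{0}\rangle_{\partial T}$, which is exactly~\eqref{2w2w}. On the other hand, testing the definition~\eqref{EQ:WeakGradient} of the weak gradient of $v$ against $\vec q$ produces $(\vec q,\nabla_w v)_{T}=-(\nabla\cdot\vec q,v_{0})_{T}+\langle\vec q\cdot\vec n,v_{b}\rangle_{\partial T}$, which is~\eqref{2w2w1}. Subtracting these two identities cancels the common volume term $-(\nabla\cdot\vec q,v_{0})_{T}$ and leaves $(\vec q,\nabla v_{0})_{T}=(\vec q,\nabla_w v)_{T}+\langle\vec q\cdot\vec n,v_{0}-v_{b}\rangle_{\partial T}$. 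Substituting $\vec q=\nabla_w P=\nabla_w(Q_{h}\phi)$ in the first term on the right, and recalling $\vec q=\mathbb{Q}_{h}(\nabla\phi)$ in the boundary term, delivers exactly~\eqref{eq1le3}.

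I expect no genuine obstacle: all the fields involved are piecewise polynomials, so every integration by parts is valid without any density or regularity argument. The one point requiring care is the opening reduction, namely that replacing $\nabla\phi$ by $\mathbb{Q}_{h}(\nabla\phi)$ inside $(\,\cdot\,,\nabla v_{0})_{T}$ is exact; this rests on $\nabla v_{0}$ belonging to the projection space $[P_{1}(T)]^{2}$, which is immediate here since $v_{0}\in P_{0}(T)$ forces $\nabla v_{0}=0$, yet phrasing the step through the projection keeps the argument valid for higher-order interior spaces as well. The remainder is purely the algebraic bookkeeping of the two integration-by-parts formulas together with the appeal to Lemma~\ref{fgv1}.
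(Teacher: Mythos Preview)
Your proposal is correct and follows essentially the same approach as the paper: set $\vec q=\mathbb{Q}_h(\nabla\phi)$, use Lemma~\ref{fgv1} to identify $\vec q$ with $\nabla_w(Q_h\phi)$, write the two integration-by-parts identities \eqref{2w2w}--\eqref{2w2w1}, and subtract. In fact you are slightly more careful than the paper, which silently passes from $(\nabla\phi,\nabla v_0)_T$ to $(\vec q,\nabla v_0)_T$; you make this step explicit via the projection property and the observation that $\nabla v_0\in[P_1(T)]^2$.
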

\begin{lemma}
	Let $e_h=Q_hu-u_h\in V_h$. Then for any $v\in V_{h}^{0}$, we have
	\begin{eqnarray}
	(\nabla_w e_h,\nabla_w v)_{{\cal T}_{h}} = \ell(u,v),\label{eq:error-eq}
	\end{eqnarray}
	where $\ell(u,v)$ is defined as follows,
	\begin{eqnarray*}
	\ell(u,v) = \sum_{T\in\mathcal{T}_h}\langle(\nabla u-\Bbb{Q}_{h}\nabla u)\cdot\vec{n},v_0-v_b\rangle_{\partial T}.
	\end{eqnarray*}
	\end{lemma}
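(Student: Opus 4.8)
The plan is to reduce the statement to two ingredients we already have available: the scheme (\ref{WG form}) defining $u_h$, and the commuting/projection identity of Lemma~\ref{le3} applied with $\phi=u$. First I would use the definition $e_h=Q_hu-u_h$ together with (\ref{WG form}) to split
\begin{eqnarray*}
(\nabla_w e_h,\nabla_w v)_{{\cal T}_h}=(\nabla_w Q_hu,\nabla_w v)_{{\cal T}_h}-(\nabla_w u_h,\nabla_w v)_{{\cal T}_h}=(\nabla_w Q_hu,\nabla_w v)_{{\cal T}_h}-(f,v_0),
\end{eqnarray*}
so that everything hinges on evaluating the single quantity $(\nabla_w Q_hu,\nabla_w v)_{{\cal T}_h}$ and matching it against $(f,v_0)+\ell(u,v)$.

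Next I would apply Lemma~\ref{le3} with $\phi=u$ and sum over $T\in{\cal T}_h$, giving
\begin{eqnarray*}
(\nabla_w Q_hu,\nabla_w v)_{{\cal T}_h}=(\nabla u,\nabla v_0)_{{\cal T}_h}-\sum_{T\in{\cal T}_h}\langle\Bbb{Q}_{h}(\nabla u)\cdot\vec{n},v_0-v_b\rangle_{\partial T}.
\end{eqnarray*}
Since $v_0\in P_0(T)$ we have $\nabla v_0=0$, so the volume term $(\nabla u,\nabla v_0)_{{\cal T}_h}$ vanishes identically. I would then split the boundary sum by adding and subtracting $\nabla u\cdot\vec{n}$, which is exactly the combination appearing in $\ell(u,v)$:
\begin{eqnarray*}
-\sum_{T}\langle\Bbb{Q}_{h}(\nabla u)\cdot\vec{n},v_0-v_b\rangle_{\partial T}=\ell(u,v)-\sum_{T}\langle\nabla u\cdot\vec{n},v_0-v_b\rangle_{\partial T}.
\end{eqnarray*}

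The remaining work, and the only genuinely delicate point, is to show $\sum_{T}\langle\nabla u\cdot\vec{n},v_0-v_b\rangle_{\partial T}=-(f,v_0)$. For the $v_0$ part I would test the strong equation $-\Delta u=f$ against the piecewise constant $v_0$ on each $T$ and integrate by parts, so that $\langle\nabla u\cdot\vec{n},v_0\rangle_{\partial T}=-(f,v_0)_T$ (using that $v_0$ is constant, the volume gradient term drops out). For the $v_b$ part I would invoke that $u\in H^{2}_0(\Omega)$ makes the normal flux $\nabla u\cdot\vec{n}$ single-valued across each interior edge while $v_b$ is itself single-valued, so the two contributions of adjacent triangles cancel because their outward normals are opposite; on boundary edges the term vanishes because $v\in V_h^0$ forces $v_b=0$. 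Hence $\sum_{T}\langle\nabla u\cdot\vec{n},v_b\rangle_{\partial T}=0$ and $\sum_{T}\langle\nabla u\cdot\vec{n},v_0-v_b\rangle_{\partial T}=-(f,v_0)$. I expect this flux-continuity cancellation, together with keeping the signs consistent, to be the main obstacle, since it is where the regularity of $u$ and the boundary constraint on $v_b$ are actually used. Substituting back yields $(\nabla_w Q_hu,\nabla_w v)_{{\cal T}_h}=(f,v_0)+\ell(u,v)$, and combining with the first display gives $(\nabla_w e_h,\nabla_w v)_{{\cal T}_h}=\ell(u,v)$, as claimed.
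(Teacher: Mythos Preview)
Your proposal is correct and follows essentially the same route as the paper: test $-\Delta u=f$ against $v_0$, use integration by parts together with the cancellation $\sum_T\langle\nabla u\cdot\vec n,v_b\rangle_{\partial T}=0$, invoke Lemma~\ref{le3} with $\phi=u$, and subtract the scheme (\ref{WG form}). The only cosmetic difference is that you explicitly kill the volume term via $\nabla v_0=0$ (valid here because $v_0\in P_0(T)$), whereas the paper keeps $(\nabla u,\nabla v_0)_{{\cal T}_h}$ as a formal term that cancels between (\ref{eq1}) and (\ref{eq2}); the paper's bookkeeping would survive verbatim for higher-order $v_0$, but in the present $P_0$ setting your shortcut is perfectly fine.
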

\begin{proof}
	Testing the equation (\ref{bvp}) by $v=\{v_0,v_b\}\in V_h$ and using the fact that $\sum_{T\in {\cal T}_{h}}\left\langle \nabla u \cdot\vec{n},v_{b}\right\rangle_{\partial T}=0$, we arrive at
	\begin{eqnarray}\label{eq1}
		(\nabla u,\nabla v_0)_{T\in {\cal T}_{h}}-\left\langle \nabla u\cdot\vec{n},v_{0}-v_{b}\right\rangle _{\partial T} = (f,v_0).
	\end{eqnarray}
It follows from Lemma \ref{le3} that
\begin{eqnarray}
( \nabla u,\nabla v_{0})_{T}=( \nabla_w(Q_{h} u),\nabla_w v)_{T}&+&\left\langle (\Bbb{Q}_{h}(\nabla u)\cdot \vec{n},v_{0}-v_{b}\right\rangle _{\partial T}.\label{eq2}
\end{eqnarray}
Combining (\ref{eq1}) and (\ref{eq2}) gives
\begin{eqnarray}\label{eq3}
(\nabla_w(Q_{h}u),\nabla_w v)_{{\cal T}_{h}}=(f,v_{0})+\ell(u,v).
\end{eqnarray}
Subtracting (\ref{WG form}) from the above equation yields the error equation (\ref{eq:error-eq}), and this completes the proof. 
\end{proof}

\section{Error Estimates}\label{sect:errorestimate}
We will derive error estimates in this section.


\begin{lemma}
	Let $u\in H_{0}^{2+i}(\Omega),i=0,1,$ be the solution of the problem (\ref{bvp})-(\ref{dbc}). Then for $v\in V_{h}^{0}$,
	\begin{eqnarray}
	\ell(u,v) \le Ch^{1+i}\|u\|_{2+i}\3bar v\3bar, i=0,1,\label{ieq:ell}
	\end{eqnarray}
	respectively.
\end{lemma}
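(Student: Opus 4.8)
The plan is to estimate $\ell(u,v)$ edge by edge via a Cauchy--Schwarz argument that separates the projection-error factor $(\nabla u-\Bbb{Q}_{h}\nabla u)\cdot\vec n$ from the jump factor $v_0-v_b$, and then to bound each factor by a result already in hand. First I would apply Cauchy--Schwarz on each $\partial T$, insert the scaling weights $h_T^{1/2}$ and $h_T^{-1/2}$, and follow with a discrete Cauchy--Schwarz over the partition:
\begin{eqnarray*}
\ell(u,v) &\le& \sum_{T\in\mathcal{T}_h}\|(\nabla u-\Bbb{Q}_{h}\nabla u)\cdot\vec n\|_{\partial T}\,\|v_0-v_b\|_{\partial T}\\
&\le& \left(\sum_{T\in\mathcal{T}_h} h_T\|\nabla u-\Bbb{Q}_{h}\nabla u\|_{\partial T}^2\right)^{\frac12}\left(\sum_{T\in\mathcal{T}_h} h_T^{-1}\|v_0-v_b\|_{\partial T}^2\right)^{\frac12}.
\end{eqnarray*}
The key point is that this splitting isolates exactly the two quantities I know how to control.

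The second factor is precisely what Corollary \ref{corollary} bounds, namely $\sum_{T} h_T^{-1}\|v_0-v_b\|_{\partial T}^2\le C\3bar v\3bar^2$, so that factor is at most $C\3bar v\3bar$. For the first factor I would apply the trace inequality of Lemma \ref{le50} componentwise to $\psi=\nabla u-\Bbb{Q}_{h}\nabla u\in H^1(T)$, summing over the three edges of each $T$, which yields $h_T\|\psi\|_{\partial T}^2\le C\bigl(\|\psi\|_T^2+h_T^2\|\nabla\psi\|_T^2\bigr)$. Summing over $T\in\mathcal{T}_h$ gives
\begin{eqnarray*}
\sum_{T\in\mathcal{T}_h} h_T\|\nabla u-\Bbb{Q}_{h}\nabla u\|_{\partial T}^2 &\le& C\sum_{T\in\mathcal{T}_h}\left(\|\nabla u-\Bbb{Q}_{h}\nabla u\|_T^2+h_T^2\|\nabla u-\Bbb{Q}_{h}\nabla u\|_{1,T}^2\right)\\
&\le& Ch^{2(1+i)}\|u\|_{2+i}^2,
\end{eqnarray*}
where the last step invokes the second estimate of Lemma \ref{lt} (whose $\|\cdot\|_{1,T}$ term furnishes precisely the gradient piece produced by the trace inequality). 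Taking square roots bounds the first factor by $Ch^{1+i}\|u\|_{2+i}$, and multiplying the two factors delivers (\ref{ieq:ell}) for $i=0,1$.

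I expect the only genuine care to be in aligning the trace inequality with the approximation estimate: Lemma \ref{le50} is stated for arbitrary $\psi\in H^1(T)$, so it applies to the non-polynomial error $\nabla u-\Bbb{Q}_{h}\nabla u$, and after multiplying by $h_T$ the resulting right-hand side $\|\psi\|_T^2+h_T^2\|\nabla\psi\|_T^2$ must be matched term-for-term against the left side of the second inequality in Lemma \ref{lt}. Once the powers of $h$ are tracked correctly for each $i$, nothing beyond these two citations plus Cauchy--Schwarz is needed; the main conceptual obstacle was really upstream, in having already proved Corollary \ref{corollary}, which supplies the norm equivalence that makes the jump factor controllable by the energy norm.
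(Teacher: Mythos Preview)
Your proposal is correct and follows essentially the same route as the paper's proof: Cauchy--Schwarz with the $h_T^{1/2}$/$h_T^{-1/2}$ split, the trace inequality and approximation estimate of Lemma~\ref{lt} for the first factor, and Corollary~\ref{corollary} (equivalently Theorem~\ref{newthto}) for the second. Your version is in fact more explicit about the passage from $h_T\|\nabla u-\Bbb{Q}_h\nabla u\|_{\partial T}^2$ to $Ch^{2(1+i)}\|u\|_{2+i}^2$ than the paper, which jumps directly to the final bound.
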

\begin{proof}
	It follows from the definition of $Q_b, Q_0, \Bbb{Q}_{h}$, the Cauchy-Schwarz inequality, trace inequality (\ref{ieq:trace-1}), and Theron (\ref{newthto})
	\begin{eqnarray*}
		|\ell(u,v)|&\le& \sum_{T\in\mathcal{T}_h}\left|\langle(\nabla u-\Bbb{Q}_{h}\nabla u)\cdot\vec{n},v_0-v_b\rangle_{\partial T}\right|\\
		&=& C\sum_{T\in\mathcal{T}_h}\|\nabla u-\Bbb{Q}_{h}\nabla u\|_{\partial T} \|v_0-v_b\|_{\partial T}\\
		&\le& C\bigg(\sum_{T\in\mathcal{T}_h}h_{T}\|\nabla u-\Bbb{Q}_{h}\nabla u\|_{\partial T}^2\bigg)^{1/2}
		\bigg(\sum_{T\in\mathcal{T}_h}h_{T}^{-1}\|v_0-v_b\|_{\partial T}^2\bigg)^{1/2}\\&\leq &C h^{1+i}\|u\|_{2+i}\3bar v\3bar,i=0,1.
		\end{eqnarray*}
This completes the proof.
\end{proof}

\begin{theorem}\label{TH:WG}
	Let $u$ and $u_h\in V_h$ be the exact solution and SFWG finite element solution of the problem (\ref{bvp})-(\ref{dbc}) and (\ref{WG form}). In addition, assume the regularity of exact solution $u\in H_{0}^{2+i}(\Omega),i=0,1$, then there exists a constant $C$ such that
	\begin{eqnarray}
	\3bar Q_h u-u_h\3bar\le C h^{1+i}\|u\|_{2+i},i=0,1,\label{H1er}
	\end{eqnarray}
	respectively.
\end{theorem}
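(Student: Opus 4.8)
The plan is to run the classical energy argument, since the two ingredients that do the real work—the error equation $(\nabla_w e_h,\nabla_w v)_{{\cal T}_h}=\ell(u,v)$ and the consistency estimate $\ell(u,v)\le Ch^{1+i}\|u\|_{2+i}\3bar v\3bar$—are already in place. First I would verify that $e_h=Q_hu-u_h$ is an admissible test function, i.e. $e_h\in V_h^0$: the discrete solution $u_h$ lies in $V_h^0$ by construction, and since $u\in H_0^{2+i}(\Omega)$ vanishes on $\partial\Omega$, its boundary projection satisfies $Q_bu=0$ there, so $Q_hu\in V_h^0$ as well; hence $e_h\in V_h^0$ and may legitimately be substituted into the error equation.

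Next I would take $v=e_h$ in the error equation. By the definition (\ref{def-norm}) of the energy norm this gives $\3bar e_h\3bar^2=(\nabla_w e_h,\nabla_w e_h)_{{\cal T}_h}=\ell(u,e_h)$. Applying the consistency bound (\ref{ieq:ell}) with $v=e_h$ then yields $\3bar e_h\3bar^2\le Ch^{1+i}\|u\|_{2+i}\3bar e_h\3bar$. If $\3bar e_h\3bar=0$ the conclusion is trivial; otherwise, dividing both sides by $\3bar e_h\3bar$ produces $\3bar Q_hu-u_h\3bar=\3bar e_h\3bar\le Ch^{1+i}\|u\|_{2+i}$, which is precisely (\ref{H1er}) for each $i=0,1$.

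At this final stage there is essentially no obstacle: the estimate is a one-line absorption once the error equation and the bound on $\ell$ are available. The genuine difficulties have already been resolved earlier—in particular the norm equivalence of Theorem \ref{newthto}, which underwrites the consistency estimate by controlling $\sum_{T}h_T^{-1}\|v_0-v_b\|_{\partial T}^2$ through $\3bar v\3bar^2$, and that equivalence in turn rests on the jump bound of Lemma \ref{mainlemma}. The only point requiring a moment's care here is the admissibility of $e_h$ as a test function, which is why I would record the boundary-trace check explicitly before the substitution rather than treat it as automatic.
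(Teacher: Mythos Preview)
Your proposal is correct and follows essentially the same energy argument as the paper: test the error equation with $v=e_h$, use (\ref{ieq:ell}), and cancel one factor of $\3bar e_h\3bar$. Your explicit verification that $e_h\in V_h^0$ is a welcome bit of added care that the paper leaves implicit, but otherwise the two proofs are identical.
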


\begin{proof}
	It follows from error equation (\ref{eq:error-eq}) that
	\begin{eqnarray*}
		\3bar Q_hu-u_h\3bar^2 = (\nabla_w e_h,\nabla_w e_{h})_{{\cal T}_{h}} = \ell(u,e_h).
	\end{eqnarray*}
	Letting $v=e_h$ in (\ref{ieq:ell}), yields
	\begin{eqnarray*}
		\3bar Q_hu-u_h\3bar^2 \le C h^{1+i}\|u\|_{2+i}\3bar e_h\3bar,i=0,1,
	\end{eqnarray*}
	and this implies the conclusion.
\end{proof}
\section{Error Estimates in $L^{2}$ norm}\label{sect:L2error}
The duality argument is utilized to get $L^{2}$ error estimate. Let $e_{h}=\{e_{0},e_{b}\}=Q_{h}u-u_{h}$. The dual problem seeks $\Phi\in H_{0}^{2}(\Omega)$ satisfying
\begin{eqnarray}
-\Delta\Phi&=&e_{0}\label{dua01},\quad \mbox{ in }\Omega\\\Phi&=&0,\quad \mbox{ on } \partial \Omega\nonumber.
\end{eqnarray}
Suppose that the following $H^{2}$-regularity holds true
\begin{eqnarray}
\doublenorm{\Phi}_{2}\leq C\doublenorm{e_{0}}.\label{dual1}
\end{eqnarray}
\begin{theorem}
	Let $u_{h}=\{u_{0},u_{b}\}$ be the SFWG finite element solution of (\ref{WG form}). Assume that the exact solution $u\in H^{2+i}_{0}(\Omega),i=0,1$ and (\ref{dual1}) holds true. Then, there exists a constant $C$ such that 
	\begin{eqnarray}
	\doublenorm{Q_{0}u-u_{0}}&\leq& C h^{1+i}\|u\|_{2+i}, i=0,1,\label{l2erora}\\ \|u-u_{0}\|&\leq& Ch\|u\|_{2}\label{l2erora1}.
	\end{eqnarray} 
\end{theorem}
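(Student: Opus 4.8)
The plan is to run the standard Aubin--Nitsche duality argument, exploiting the dual problem (\ref{dua01}) and its $H^2$-regularity (\ref{dual1}). Writing $e_h=\{e_0,e_b\}=Q_hu-u_h$, I first note that $e_h\in V_h^0$: on $\partial\Omega$ one has $u=0$, hence $e_b=Q_bu-u_b=0$ there. Starting from $\|e_0\|^2=(e_0,e_0)=-(\Delta\Phi,e_0)$ and integrating by parts on each $T$, I would use $\nabla e_0=0$ (since $e_0\in P_0(T)$) together with the continuity of $\nabla\Phi\cdot\vec n$ across interior edges and $e_b=0$ on $\partial\Omega$ to collapse the bulk terms and obtain $\|e_0\|^2=-\sum_{T}\langle\nabla\Phi\cdot\vec n,\,e_0-e_b\rangle_{\partial T}$.

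Next I would insert the projection $\mathbb{Q}_h\nabla\Phi$. Applying Lemma~\ref{le3} with $\phi=\Phi$ and $v=e_h$, and again using $\nabla e_0=0$, gives $(\nabla_w Q_h\Phi,\nabla_w e_h)_{\mathcal T_h}=-\sum_T\langle \mathbb{Q}_h\nabla\Phi\cdot\vec n,\,e_0-e_b\rangle_{\partial T}$, so that $\|e_0\|^2=-\ell(\Phi,e_h)+(\nabla_w Q_h\Phi,\nabla_w e_h)_{\mathcal T_h}$, where $\ell(\Phi,e_h)=\sum_T\langle(\nabla\Phi-\mathbb{Q}_h\nabla\Phi)\cdot\vec n,e_0-e_b\rangle_{\partial T}$. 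Because $\Phi=0$ on $\partial\Omega$, the test function $Q_h\Phi$ lies in $V_h^0$, so the error equation (\ref{eq:error-eq}) applies with $v=Q_h\Phi$ and yields $(\nabla_w e_h,\nabla_w Q_h\Phi)_{\mathcal T_h}=\ell(u,Q_h\Phi)$. Combining the two identities reduces everything to $\|e_0\|^2=\ell(u,Q_h\Phi)-\ell(\Phi,e_h)$.

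It then remains to bound the two functionals. For $\ell(\Phi,e_h)$ I would reuse the estimate behind (\ref{ieq:ell}) (Cauchy--Schwarz on $\partial T$, the trace inequality (\ref{ieq:trace-1}), the approximation bound of Lemma~\ref{lt}, and Corollary~\ref{corollary}), getting $|\ell(\Phi,e_h)|\le Ch\|\Phi\|_2\,\3bar e_h\3bar$; invoking the $H^1$-estimate (\ref{H1er}) and the regularity $\|\Phi\|_2\le C\|e_0\|$ turns this into $Ch^{2+i}\|u\|_{2+i}\|e_0\|$. For $\ell(u,Q_h\Phi)$ I would again split via Cauchy--Schwarz, controlling the first factor $\big(\sum_T h_T\|\nabla u-\mathbb{Q}_h\nabla u\|_{\partial T}^2\big)^{1/2}\le Ch^{1+i}\|u\|_{2+i}$ by Lemma~\ref{lt} and the trace inequality.

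The one genuinely delicate point is the second factor $\big(\sum_T h_T^{-1}\|Q_0\Phi-Q_b\Phi\|_{\partial T}^2\big)^{1/2}$: here I would use that $Q_b$ reproduces the constant $Q_0\Phi$, so $Q_0\Phi-Q_b\Phi=Q_b(Q_0\Phi-\Phi)$, whence $\|Q_0\Phi-Q_b\Phi\|_{\partial T}\le\|Q_0\Phi-\Phi\|_{\partial T}$, and then the trace inequality together with $\|Q_0\Phi-\Phi\|_T\le Ch_T\|\nabla\Phi\|_T$ and $\nabla Q_0\Phi=0$ gives $\|Q_0\Phi-Q_b\Phi\|_{\partial T}^2\le Ch_T\|\nabla\Phi\|_T^2$; summing yields the bound $C\|\Phi\|_1\le C\|e_0\|$. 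Hence $|\ell(u,Q_h\Phi)|\le Ch^{1+i}\|u\|_{2+i}\|e_0\|$, which is the dominant term. Putting the two estimates together gives $\|e_0\|^2\le Ch^{1+i}\|u\|_{2+i}\|e_0\|$, i.e. (\ref{l2erora}). Finally (\ref{l2erora1}) follows by the triangle inequality $\|u-u_0\|\le\|u-Q_0u\|+\|Q_0u-u_0\|$, estimating the first term by $Ch\|u\|_1$ via Lemma~\ref{lt} and the second by the case $i=0$ just proved. I expect the projection manipulation $Q_0\Phi-Q_b\Phi=Q_b(Q_0\Phi-\Phi)$ and its $O(h_T^{1/2})$ edge bound to be the crux, since it is precisely what reflects that the interior value is only piecewise constant and explains why no extra power of $h$ beyond the energy rate is gained.
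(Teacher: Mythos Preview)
Your proposal is correct and follows essentially the same Aubin--Nitsche duality argument as the paper: derive $\|e_0\|^2=\ell(u,Q_h\Phi)\pm\ell(\Phi,e_h)$ by combining the dual problem with Lemma~\ref{le3} and the error equation~(\ref{eq:error-eq}), then bound each functional via Cauchy--Schwarz, the trace inequality, Lemma~\ref{lt}, and Corollary~\ref{corollary}. Your treatment is in fact slightly more explicit than the paper's in two places---you spell out that $\nabla e_0=0$ and you justify $\|Q_0\Phi-Q_b\Phi\|_{\partial T}\le\|Q_0\Phi-\Phi\|_{\partial T}$ via $Q_0\Phi-Q_b\Phi=Q_b(Q_0\Phi-\Phi)$---but the structure and the key estimates are identical.
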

\begin{proof}
	Testing (\ref{dua01}) by $e_{0}$ and using the fact that $\sum_{K\in {\cal T}_{h}}\left\langle \nabla\Phi\cdot\vec{n},e_{b}\right\rangle _{\partial {\cal T}_{h}}=0$, we obtain
	\begin{eqnarray}
	\doublenorm{e_{0}}^{2}&=&(-\Delta\Phi,e_{0})\nonumber\\&=&(\nabla\Phi,\nabla e_{0})_{ {\cal T}_{h}}-\left\langle \nabla\Phi\cdot\vec{n},e_{0}-e_{b}\right\rangle_{\partial {\cal T}_{h}}.\label{l21}
	\end{eqnarray}
	Setting $\phi=\Phi$ and $v=e_{h}$ in (\ref{eq1le3}) yields
	\begin{eqnarray}
	( \nabla \Phi,\nabla e_{0})_{{\cal T}_{h}}=( \nabla_w(Q_{h} \Phi),\nabla_w e_{h})_{{\cal T}_{h}}&+&\left\langle \Bbb{Q}_{h}(\nabla \Phi)\cdot\vec{n},e_{0}-e_{b}\right\rangle _{\partial {\cal T}_{h}}.\label{l22}
	\end{eqnarray}
	Substituting (\ref{l22}) into (\ref{l21}) gives
	\begin{eqnarray}
	\doublenorm{e_{0}}^{2}&=&( \nabla_w(Q_{h} \Phi),\nabla_w e_{h})_{{\cal T}_{h}}+\ell(\Phi,e_{h}).\label{l2220}
	\end{eqnarray}
	Using equation \ref{WG form} and the error equation (\ref{eq:error-eq}), we have
	\begin{eqnarray}
	( \nabla_w(Q_{h} \Phi),\nabla_w e_{h})_{{\cal T}_{h}}&=&\ell(u,Q_{h}\Phi).\label{l245}
	\end{eqnarray} 
	By combining (\ref{l2220}) with (\ref{l245}), we obtain
	\begin{eqnarray}
	\doublenorm{e_{0}}^{2}&=&\ell(u,Q_{h}\Phi)+\ell(\Phi,e_{h}).\label{l2q2}
	\end{eqnarray}
	To bound the terms on the right-hand side of equation (\ref{l2q2}). We use the Cauchy-Schwarz inequality, the trace inequality (\ref{ieq:trace-1}) and the definition of $Q_{h}$ and $\Bbb{Q}_{h}$ to get 
	\begin{eqnarray}
	\left| \ell(u,Q_{h}\Phi)\right| &=&\left| \sum_{T\in{\cal T}_h }\left\langle \nabla u-\Bbb{Q}_{h}(\nabla u)\cdot \vec{n},Q_{0}\Phi-Q_{b}\Phi\right\rangle _{\partial T} \right| \nonumber\\ &\leq& \left( \sum_{T\in{\cal T}_h }\doublenorm{\nabla u-\Bbb{Q}_{h}\nabla u}^{2}_{\partial T}\right)^{\frac{1}{2}} \left( \sum_{T\in{\cal T}_h }\doublenorm{Q_{0}\Phi-Q_{b}\Phi}^{2}_{\partial T}\right)^{\frac{1}{2}}\nonumber \\ &\leq& C \left( \sum_{T\in{\cal T}_h }h_{T}\doublenorm{\nabla u-\Bbb{Q}_{h}\nabla u}^{2}_{\partial T}\right)^{\frac{1}{2}} \left( \sum_{T\in{\cal T}_h }h^{-1}_{T}\doublenorm{Q_{0}\Phi-\Phi}^{2}_{\partial T}\right)^{\frac{1}{2}}\nonumber\\ &\leq& Ch^{1+i}\|u\|_{2+i}\|\Phi\|_{1}\nonumber,
	\end{eqnarray}
	which implies
	\begin{eqnarray}
	\left| \ell(u,Q_{h}\Phi)\right|\leq Ch^{1+i}\|u\|_{2+i}\|\Phi\|_{2},i=0,1\label{l21e}.
	\end{eqnarray}
The estimates (\ref{ieq:ell}), (\ref{H1er}), and Lemma \ref{lt} give
	\begin{eqnarray}
	\left| \ell(\Phi,e_{h})\right| &=&\left| \left\langle \nabla \Phi-\Bbb{Q}_{h}(\nabla \Phi)\cdot\vec{n},e_{0}-e_{b}\right\rangle _{\partial {{\cal T}_{h}}} \right| \nonumber\\& \leq& C h^{1+i}\|u\|_{2+i}\|\Phi\|_{2},i=0,1.\label{l21e2}
	\end{eqnarray}
Now combining (\ref{l2q2}) with the estimates (\ref{l21e})-(\ref{l21e2}), we obtain 
	\begin{eqnarray}
	\doublenorm{e_{0}}^{2}\leq C h^{1+i}\|u\|_{2+i}\|\Phi\|_{2},i=0,1,
	\end{eqnarray}
	which combined with (\ref{dual1}) and the triangle inequality, provides the required error estimate (\ref{l2erora}). (\ref{l2erora1}) follows from 
	\begin{eqnarray*}
	\|u-u_{0}\|\leq \|u-Q_{0}u\|+\|Q_{0}u-u_{0}\|\leq Ch\|u\|_{2}.
	\end{eqnarray*}
\end{proof}
\section{Numerical Experiments}\label{sect:num}
In this section, two numerical examples in two dimensional uniform triangular meshes are presented to validate the theoretical results derived in previous sections. Since the regular SFWG method does not work for $\left(P_{0}(T), P_{1}(e), [P_{1}(T)]^{2}\right)$ elements, we'll compare our new SFWG method with the standard WG method.
\begin{example}
In this example, we use the SFWG scheme (\ref{WG form}) to solve the Poisson problem (\ref{bvp})-(\ref{dbc}) posed on the unit square $\Omega=(0,1)\times (0,1)$ with the analytic solution 
\begin{eqnarray}
u(x,y)=\sin(\pi x)\sin(\pi y).
\end{eqnarray}
The boundary conditions and the source term $f(x,y)$ are computed accordingly. The first two levels of meshes are plotted in Fig \ref{fi223}. Table \ref{tab:example1} shows errors and convergence rates in $H^{1}$-norm and $L^{2}$-norm comparison between the SFWG finite element method and the WG finite element method proposed in \cite{al2020}. As we can see in \ref{tab:example1} that the error between $u_{0}$, the numerical solution obtain from SFWG method, and $Q_{0}u$, the $L^{2}$-projection of $u$ is $\|Q_{0}u-u_{0}\|=O(h^{2})$. On the other hand, if the regular WG method is used, $\|Q_{0}u-u_{0}\|=O(h)$, much lower than $O(h^{2})$. Thus our new SFWG method is much more accurate.
\begin{figure}
	\centering
	\subfloat[Level 1]{{\includegraphics[width=5cm]{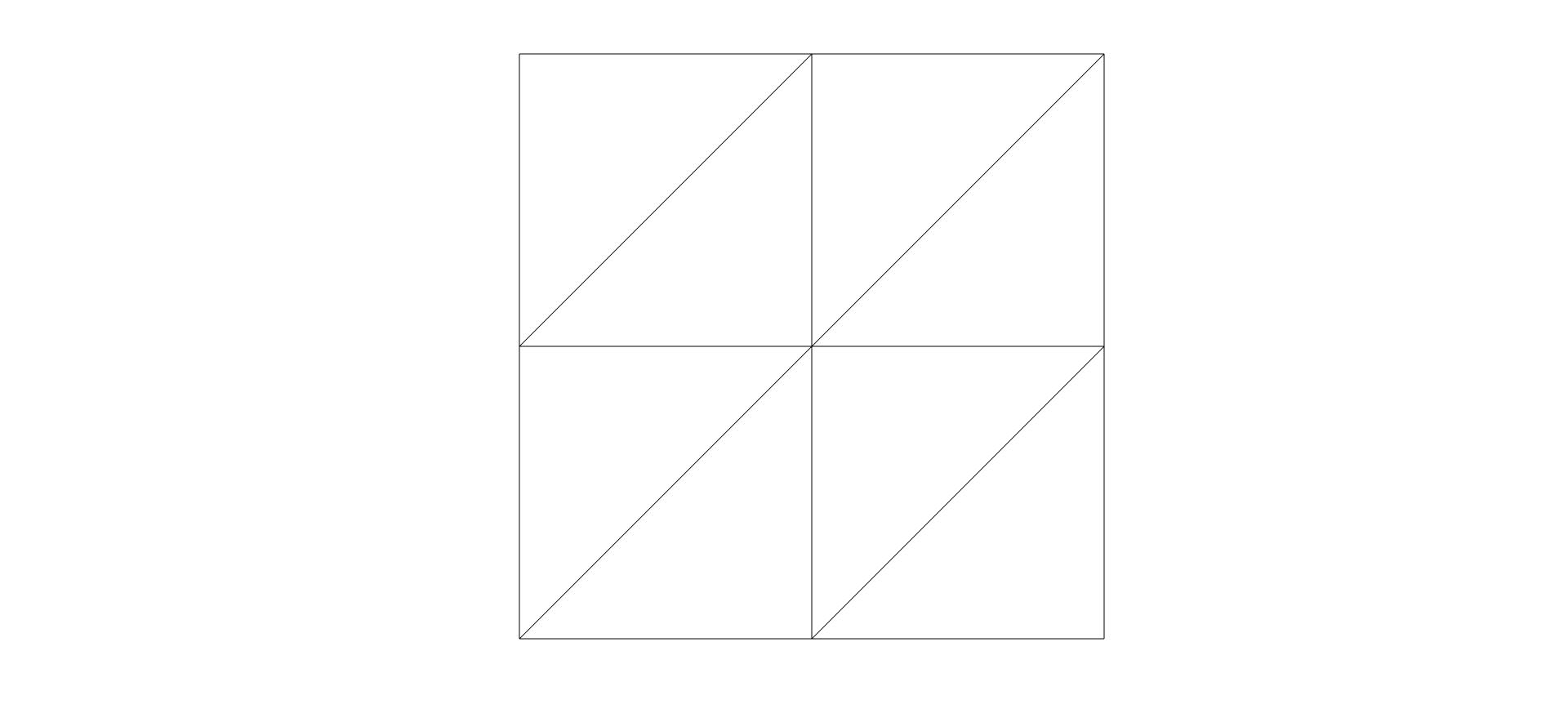} }}%
	\qquad
	\subfloat[Level 2]{{\includegraphics[width=5cm]{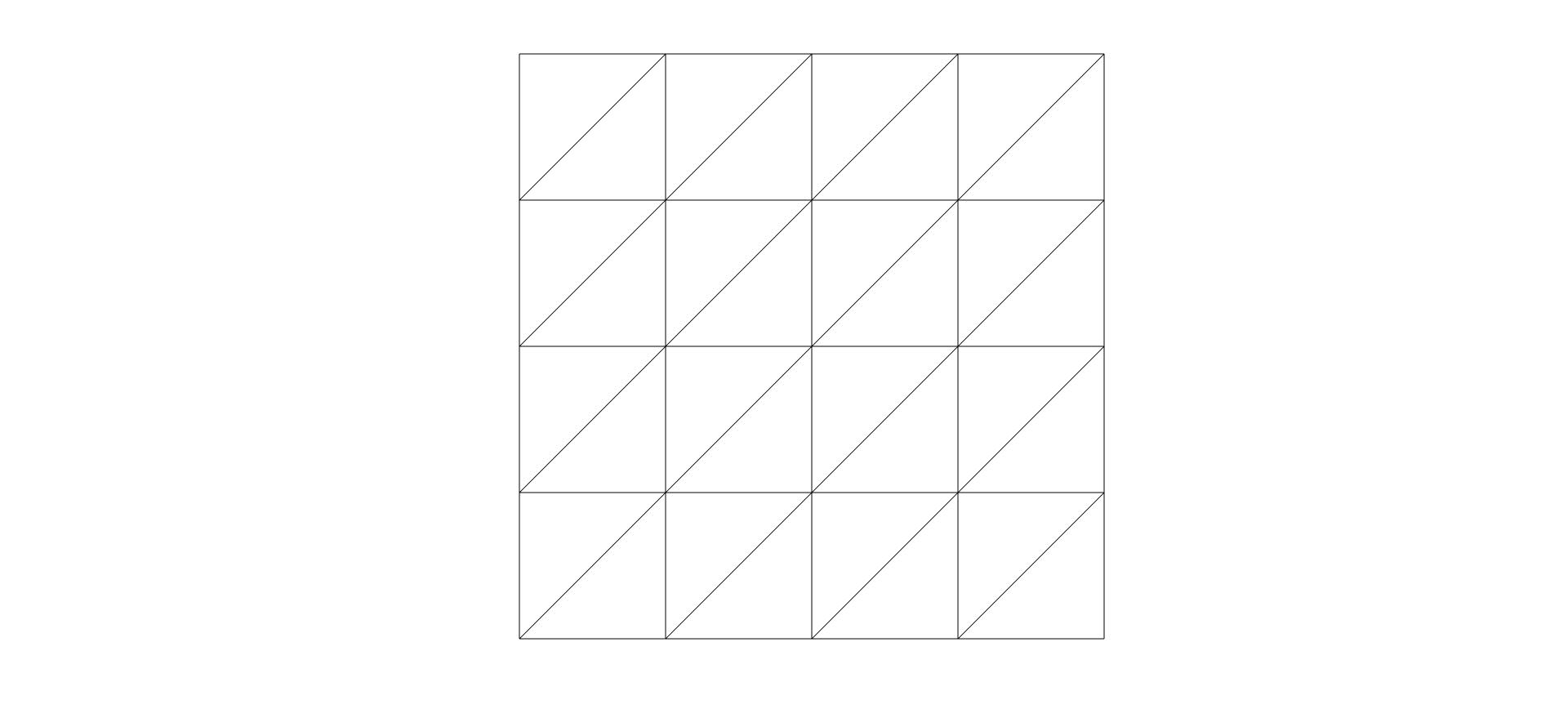} }}%
	\caption{A triangulation of a square domain in computation}%
	\label{fi223}%
\end{figure}%
\begin{table}[H]
	\caption{Errors and convergence results for $\left( P_{0}(T),P_{1}(e),[P_{1}(T)]^{2}\right) $ elements.} 
	\label{tab:example1}
	\small 
	\centering 
	\begin{tabular}{lccccccccr} 
		\toprule[\heavyrulewidth]\toprule[\heavyrulewidth]
		& &   \textbf{SFWG elements}& &  & &   \textbf{WG elements} &&  &  \\ 
		\midrule
		\textbf{$1/h$} & \textbf{$\3barQ_{h}u-u_{h}\3bar$} & \textbf{$\mbox{Rate}$} & \textbf{$\|Q_{0}u-u_{0}\|$} & \textbf{$\mbox{Rate}$}& \textbf{$\3barQ_{h}u-u_{h}\3bar$} & \textbf{$\mbox{Rate}$} & \textbf{$\|Q_{0}u-u_{0}\|$} & \textbf{$\mbox{Rate}$} \\ 
		\midrule
		2   &6.2075E-01 &- &8.8329E-02 &- & 1.1717E-00 &- &1.5192E-01  &-  \\
		4   & 1.8108E-01  & 1.78  &3.0651E-02  & 1.53 & 5.7311E-01 & 1.03 &9.3682E-02  &0.70\\
		8 & 4.7252E-02  & 1.94  &8.3544E-03  & 1.88 & 2.7088E-01 & 1.08 &4.8636E-02  &0.95 \\
		16  & 1.1952E-02  & 1.98  &2.1351E-03  & 1.97 & 1.2974E-01 & 1.06 &2.4264E-02 &1.00 \\  
		32  & 2.9971E-03  & 2.00  &5.3676E-04  & 1.99 & 6.3209E-02 & 1.04 &1.2045E-02  &1.00 \\ 
		64  & 7.5022E-04  & 2.00  &1.3438E-04  & 2.00 & 3.1159E-02 & 1.02 &5.9911E-03  &1.00 \\		
		\bottomrule[\heavyrulewidth] 
	\end{tabular}
\end{table}
Figure \ref{ti} shows the computational time (in seconds) comparison between SFWG finite element method and weak Galerkin finite element method. As we can see in Figure \ref{ti} that the SFWG algorithm is running faster than the standard weak Galerkin algorithm. We can also see in Figure \ref{ti} that the computation time with $8192$ elements by using the SFWG is $15.0469$, which is much less than $16.5156$, needed by using the standard weak Galerkin algorithm. Therefore, when a large number of elements are used the computation time becomes a significant factor. Thus the SFWG method is more efficient in both accuracy and computation time. Numerical example is carried out on a Laptop computer with 12.0 GB memory and Intel(R) Core (TM) i7-8550U CPU @ 1.80 GHz.
\begin{figure}[h!]
	\centering
	\includegraphics[width=0.9\textwidth]{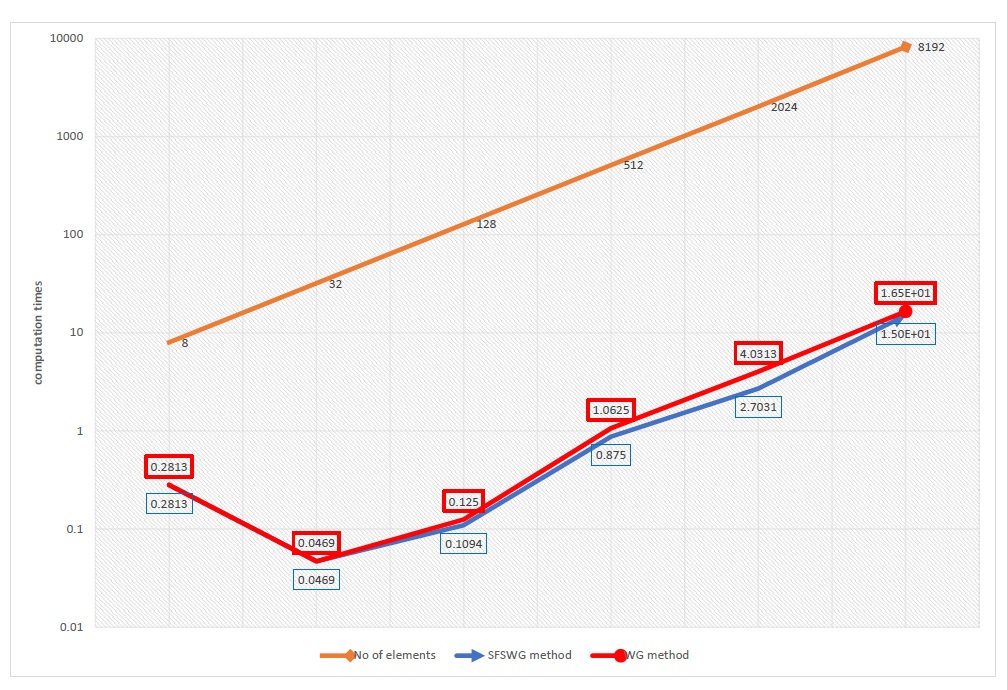}
	\caption{Comparison of computation times for $\left( P_{0}(T), P_{1}(e), [P_{1}(T)]^{2}\right)$ elements.}\label{ti}
\end{figure}
\end{example}
\begin{example}
	This example is adopted from \cite{soos}. Let $\Omega=(0, 1)^{2}$ and the boundary value condition (\ref{bvp}) is chosen such that the exact solution is
	\begin{align}\label{ex3}
	u(x,y)=r^{2/3}\sin\frac{2\theta}{3},\qquad (x,y)\in \Omega,
	\end{align} \label{singlularproblm}
	where polar coordinates $r=\sqrt{x^{2}+y^{2}}$ and $\theta=\arctan(\frac{y}{x})$ are used.
\begin{table}[H]
	\caption{Errors and convergence results for $\left( P_{0}(T),P_{1}(e),[P_{1}(T)]^{2}\right) $ elements.} 
	\label{tab:example 2}
	\small 
	\centering 
	\begin{tabular}{lccccccccr} 
		\toprule[\heavyrulewidth]\toprule[\heavyrulewidth]
		& &   \textbf{SFWG elements}& &  & &   \textbf{WG elements} &&  &  \\ 
		\midrule
		\textbf{$1/h$} & \textbf{$\3barQ_{h}u-u_{h}\3bar$} & \textbf{$\mbox{Rate}$} & \textbf{$\|Q_{0}u-u_{0}\|$} & \textbf{$\mbox{Rate}$}& \textbf{$\3barQ_{h}u-u_{h}\3bar$} & \textbf{$\mbox{Rate}$} & \textbf{$\|Q_{0}u-u_{0}\|$} & \textbf{$\mbox{Rate}$} \\ 
		\midrule
		2   &1.6754E-02  & -     &1.1548E-03  & - & 9.3655E-02 &- &3.5650E-03  &-  \\
		4   & 1.0645E-02  & 0.65 &3.7097E-04  & 1.64 & 5.6604E-02 & 1.03 &1.6153E-03  &1.14\\
		8 & 6.7121E-03 & 0.67   &1.1709E-04  & 1.66 & 3.1030E-02 & 1.08 &7.5599E-04  &1.10 \\
		16  & 4.2294E-03  & 0.67  &3.6893E-05  & 1.67 & 1.6352E-02 & 1.06 &3.7573E-04 &1.00 \\  
		32  & 2.6644E-03  & 0.67  &1.1621E-05  & 1.67 & 8.4922E-03 & 1.04 &1.8985E-04  &0.98 \\ 
		64  & 1.6784E-03  & 0.67  &3.6605E-06  & 1.67 & 4.4059E-03 & 1.02 &9.5830E-05  &0.99 \\		
		\bottomrule[\heavyrulewidth] 
	\end{tabular}
\end{table}	
	The exact solution $u\in H^{1+2/3}(\Omega)$. Table \ref{tab:example 2} shows the convergence rates in the $H^{1}$-norm is $O(h^{\frac{2}{3}})$ and $L^{2}$-norm is $O(h^{\frac{5}{3}})$ by using the SFWG algorithm. 
\end{example}
\section{Conclusion and Remark}\label{sect:conclusion}
In this paper, we have developed a new SFWG finite element methods for the Poisson equation (\ref{bvp})-(\ref{dbc}) on triangle mesh.
The stabilizer free setting has been used in the numerical scheme. The error estimate in energy norm has been provided and validated in the numerical tests.

Numerical experiments have shown that the new SFWG finite element method works for $\left( P_{k}(T), P_{k+1}(e), [P_{k+1}(T)]^{2}\right) $ elements in general. One of our future projects is to extend the theoretical results to the general cases.

\end{document}